\theoremstyle{plain}
\newtheorem{theorem}{Theorem}[section]
\newtheorem{lemma}[theorem]{Lemma}
\newtheorem{assumption}[theorem]{Assumption}
\newtheorem{prop}{Proposition}
\theoremstyle{remark}
\newtheorem{remark}{Remark}
\newcommand{\uv}{u_{\mathtt{V}}}
\newcommand{\up}{u_{\mathtt{P}}}
\newcommand{\xr}{x_{\mathtt{R}}}
\newcommand{\xs}{x_{\mathtt{S}}}
\newcommand{\xI}{x_{\mathtt{I}}}
\newcommand{\cP}{c_{\mathtt{P}}}
\newcommand{\cV}{c_{\mathtt{V}}}
\newcommand{\cI}{c_{\mathtt{I}}}
\newcommand{\gv}{\textbf{g}_{\mathtt{V}}}
\newcommand{\gp}{\textbf{g}_{\mathtt{P}}}
\newcommand\um[1]{{\color{black}  #1}}
\newcommand\rev[1]{{\color{black}  #1}}
\DeclareMathOperator*{\argmin}{arg\,min}
\title{Optimal protection and vaccination against epidemics with reinfection risk}
\author[1]{Urmee Maitra\thanks{\texttt{urmeemaitra93@kgpian.iitkgp.ac.in}}}
\author[1]{Ashish R. Hota\thanks{\texttt{ahota@ee.iitkgp.ac.in}}}
\author[2]{Rohit Gupta\thanks{\texttt{rohit@aero.iitb.ac.in}}}
\author[3]{Alfred O. Hero\thanks{\texttt{hero@eecs.umich.edu}}}
\affil[1]{Department of Electrical Engineering, Indian Institute of Technology, Kharagpur}
\affil[2]{Department of Aerospace Engineering, Indian Institute of Technology, Bombay}
\affil[3]{Department of Electrical Engineering and Computer Science, University of Michigan, Ann Arbor}
\date{}
\begin{document}
\maketitle

\begin{abstract}
We consider the problem of the optimal allocation of vaccination and protection measures for the Susceptible-Infected-Recovered-Infected (SIRI) epidemiological model, which generalizes the classical Susceptible-Infected-Recovered (SIR) and Susceptible-Infected-Susceptible (SIS) epidemiological models by allowing for reinfection. First, we introduce the controlled SIRI dynamical model, and discuss the existence and stability of the equilibrium points. Then, we formulate a finite-horizon optimal control problem where the cost of vaccination and protection is proportional to the mass of the population that adopts it. Our main contribution in this work arises from a detailed investigation into the existence/non-existence of singular control inputs, and establishing optimality of bang-bang controls. The optimality of bang-bang control is established by solving an optimal control problem with a running cost that is linear with respect to the input variables. The input variables are associated with actions including the vaccination and imposition of protective measures (e.g., masking or isolation). In contrast to most prior works, we rigorously establish the non-existence of singular controls (i.e., the optimality of bang-bang control for our SIRI model). Under the assumption that the reinfection rate exceeds the first-time infection rate, we characterize the structure of both the optimal control inputs, and establish that the vaccination control input admits a bang-bang structure. The numerical results provide valuable insights into the evolution of the disease spread under optimal control.
\end{abstract}

\textbf{Mathematics Subject Classification:} 92D30, 93-10

\maketitle

\section{Introduction}\label{sec1}

 As observed during the COVID-19 pandemic, if left unchecked, infectious diseases potentially spread across the entire planet in the span of a few weeks and cause significant damage in terms of mortality and life-long impairments. In addition, the emergence of different variants may lead to reinfection, once the initial immunity weakens over time. \rev{In such situations, policy makers impose restrictions on individuals in the form of social distancing and mandatory mask usage. Additionally, they administer vaccines which offer partial immunity against the disease.} However, such interventions have a significant social and economic cost, and it is important to strike the right balance among the different options that are available. In this regard, dynamical systems and the optimal control theory have emerged as promising tools that provide policy-makers with appropriate guidelines and insights into mitigating epidemics (see, e.g., \cite{authour1,authour2}). Additionally, the optimal control of fractional-order systems has been used for spreading processes \cite{authour3}.

Starting from seminal works such as \cite{authour4,authour5}, there have been numerous investigations on optimal control of epidemiological processes, which largely consider compartmental dynamical models of epidemic evolution \cite{authour1}; see \cite{authour2} for a recent review. A majority of the past efforts have been \um{directed} towards optimal protection in the context of Susceptible-Infected-Recovered (SIR) epidemics and its variants (see, e.g., \cite{authour6,authour7,authour8,authour9}). More recent papers \cite{authour10,authour11,authour12,authour13} have considered vaccinations as additional control input (in addition to protection or social distancing measures). These works assume that the running cost is quadratic in the control input. However, it is natural to assume that the cost (of vaccination or protection) is proportional to the magnitude of the control input or the \um{fraction} of the population on which the input is administered. A few additional papers \um{(see, e.g., \cite{authour14,authour15}) have investigated the use of optimal control techniques,} when the population size is dynamically changing. Other related approaches are also explored in \cite{authour16,authour17,authour18}. The methodology for containing COVID-19 Delta strain spread was explored in \cite{authour19}. The authors included asymptomatic agents and captured the notion of an imperfect vaccination in their model. It is well established that fractional order optimal controls have advantages in the form of greater flexibility and higher accuracy over the classical integer order controls. The fractional order models of COVID-19 and other diseases were thoroughly explored in \cite{authour20, authour21, authour22}.

There have been limited investigations into epidemiological models where recovery does not give permanent immunity, and hence, reinfection is \um{also} possible as a result. In addition, even past works that assumed a cost functional that was linear in the control input which led to a bang-bang optimal control structure, the possibility of the existence of singular arcs and singular control inputs is not often examined in a rigorous manner (the work \cite{authour23} is a notable exception in this regard). Nevertheless, in practice, it is important to characterize the possibility of singular control inputs in order to provide insights into policy-making decisions, thus informing the authorities of the expected impact of either imposing or relaxing interventions.

The motivation for this work is to establish the existence of non-singular optimal policies to control the spread of epidemics via limited vaccination and protective measures by solving an optimal control problem considering a running cost that is linear with respect to the input variables in an epidemic model with reinfection risk. Our setting differs from most prior studies on the optimal control of epidemics that assume the objective function to be superlinear in the control inputs, which leads to a simpler analysis, and the issue of singular inputs can be avoided. For example, in \cite{authour24}, the authors used the SIR model where the objective function was quadratic in the control inputs. However, it is more reasonable to consider the running cost to be linear with respect to the input variables; indeed, the cost of vaccination (and other protection measures) is directly proportional to the fraction of the population being vaccinated (or adopting protective measures). While some studies, such as \cite{authour25}, assumed running costs that were linear in the control inputs, they focused on bang-bang controls without ruling out the possibility of singular controls. In this work, we consider a generalized epidemiological~model that incorporates both recovery and reinfection (similar to observations made during COVID-19), specifically the susceptible-infected-recovered-infected (SIRI) epidemic model (see e.g., \cite{authour26}). Our model includes both non-pharmaceutical and medical resources as inputs, and the running cost is assumed to be linear in these control variables. Additionally, during COVID-19, we observed higher reinfection rates due to variants such as Delta and Omicron, which supports the focus on compromised immunity in this work. Under appropriate assumptions, we specifically exclude the possibility of simultaneous singularities and analytically prove that vaccination control does not exhibit singularities.

 In the SIRI model, the rate of reinfection is different from the rate of initial infection, with higher values indicating compromised immunity and a smaller rate of reinfection indicating a partial immunity imparted by the disease and/or vaccinations. As analyzed in \cite{authour27}, it was assumed that vaccinations were only available for the susceptible sub-population who transit to the recovered compartment, thus reflecting the fact that vaccinations impart a certain degree of protection for the short term, but not complete immunity (a similar phenomenon was also observed during the COVID-19 pandemic).

The main contributions of the paper are as follows. We analyze the optimality conditions for the associated optimal control problem and rule out the existence of a simultaneous singularity of both control inputs on the SIRI epidemiological model \um{in the scenario of compromised immunity}. Then, we carry out a detailed investigation regarding the singularity of the vaccination control input, and \um{under sufficient conditions, we} show that it does not admit a singular arc (i.e., the vaccination-optimal control is always at one of two possible extreme admissible values). A theoretical analysis that provides valuable insight on the vaccination-control input being non-singular (also known as bang-bang or, on-off control) is essential, since bang-bang control is often considered a more appropriate intervention in practical epidemiological and clinical settings (see, e.g., \cite{authour28, authour29}). Additionally, we demonstrate epidemic evolution under optimal control inputs for \um{a} numerical case study and show the relative impact of vaccination and protection in an epidemic containment.

The remainder of the paper is organized as follows: the controlled SIRI epidemiological model is introduced in Section \ref{sec2}, where we also prove the existence and local asymptotic stability of its equilibrium points; the optimal control problem is presented in Section \ref{sec3} and the structural properties of the optimal control inputs are established; the non-existence of singular arcs in the structure of the \um{candidate} optimal control corresponding to vaccination is established in Section \ref{sec5}; numerical results depicting the evolution of the epidemic under the optimal control inputs are presented in Section \ref{sec:numerical}; and we conclude in Section \ref{sec6} with a discussion on possible directions for future research. 


\section{Controlled SIRI epidemiological model}\label{sec2}
Motivated by the COVID-19 pandemic, we consider the SIRI epidemiological model, which was introduced in \cite{authour26}. In this setting, an individual remains in one of three possible states: susceptible $(\mathtt{S})$, infected $(\mathtt{I})$, or recovered $(\mathtt{R})$. However, recovery is not permanent, and recovered individuals also become potentially infected again upon contact with infected individuals. The rate at which a susceptible (respectively, recovered) individual becomes infected upon contact with infected individuals is denoted by $\beta > 0$ (respectively, $\hat{\beta} > 0$). In general, $\beta$ is assumed to be different from $\hat{\beta}$. When $\hat{\beta} < \beta$, the reinfection rate is smaller than the rate of new infection, which indicates that recovery imparts a partial immunity against future infection. Similarly, $\hat{\beta} > \beta$ indicates a compromised immunity following the initial infection. Finally, $\gamma > 0$ represents the rate at which an infected individual recovers, which is referred to as recovery rate. The various state transitions are depicted in Figure \ref{fig:siri_tran}.
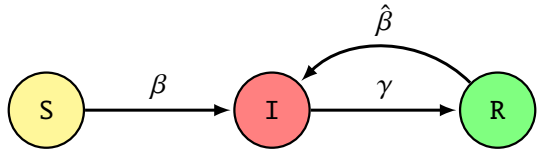
\begin{figure}[H]
\centering
\begin{tikzpicture}
\tikzset{node style/.style={state, minimum width=1cm, line width=0.3mm, fill=yellow!50!white}}
\tikzset{node style1/.style={state, minimum width=1cm, line width=0.3mm, fill=red!50!white}}
\tikzset{node style2/.style={state, minimum width=1cm, line width=0.3mm, fill=green!50!white}}
\node[node style] at (0, 0) (St)     {$\mathtt{S}$};
\node[node style1] at (3, 0) (At)     {$\mathtt{I}$};
\node[node style2] at (6, 0) (Rt)  {$\mathtt{R}$};
\draw[every loop, auto=right,line width=0.4mm,
              >=latex,
              draw=black,
              fill=black]
(St) edge[bend right=0, auto=left] node[above] {$\beta$} (At)
(At) edge[bend right=0] node[above] {$\gamma$} (Rt)
(Rt) edge[bend right=45] node[above] {$\hat{\beta}$} (At);
\end{tikzpicture}
\caption{Evolution of the states in the SIRI epidemic model (self-loops are omitted for better clarity).}
\label{fig:siri_tran}
\end{figure}

We consider two types of control inputs (which are assumed to be essentially bounded Lebesgue measurable functions): $u_{\texttt{V}}$, which captures the rate at which susceptible individuals are vaccinated, and $\up$, where $1 - \up$ captures the effective rate of social distancing or protection adoption by individuals in the disease states $\texttt{S}$ and $\texttt{R}$. As a consequence of the above control inputs, the resulting controlled SIRI epidemic dynamical equations are given by the following:
\begin{align}
\label{system_eq}
\begin{rcases}
\dot{x}_{\texttt{S}}(t) =-\beta x_{\texttt{S}}(t) x_{\texttt{I}}(t) \up(t) - \xs(t) \uv(t),
\\ \dot{x}_{\texttt{I}}(t) = \beta x_{\texttt{S}}(t) x_{\texttt{I}}(t) \up(t) + \hat{\beta}x_{\texttt{R}}(t) x_{\texttt{I}}(t) \up(t) - \gamma x_{\texttt{I}}(t),
\\ \dot{x}_{\texttt{R}}(t) =-\hat{\beta} x_{\texttt{R}}(t) x_{\texttt{I}}(t) \up(t)  +  \xs(t) \uv(t) + \gamma x_{\texttt{I}}(t),
\end{rcases}
\end{align}
where the state variables $x_{\texttt{S}}(t) \in [0,1], x_{\texttt{I}}(t) \in [0,1]$ and $x_{\texttt{R}}(t) \in [0,1]$ denote the instantaneous proportion of individuals in each of the three epidemic states $\mathtt{S}, \mathtt{I}$, and $\mathtt{R}$. Henceforth, unless required, we suppress the dependency of the states and control inputs on time $t$ for an improved readability. 
\begin{remark}
    The biological significance to controlling the epidemic spread is that our analysis accounts for reinfection in individuals, which aligns with the characteristics of several infectious~diseases, such as COVID-19, that only confers a short-term immunity. In addition, during the COVID-19 pandemic, it was demonstrated that the reinfection rates, particularly due to variants such as Delta and Omicron, exceed the initial infection rates \cite{authour30}. Motivated by this observation, we later assume that $\hat{\beta} > \beta$, which implies that getting infected compromised the immunity. These characteristics are not captured by the classical epidemic models, such as the SIR model.
\end{remark}  

\begin{remark}
Note that the term $\beta \xs \xI \up$ represents the fraction of the susceptible population who do not adopt any protection and get infected, while the term $\xs \uv$ represents the fraction of the susceptible population who opt for vaccination and move to the recovered state. Note that such individuals may become infected again in future (i.e., vaccination does not impart a permanent immunity against future infection). Similarly, the term $\hat{\beta} x_{\texttt{R}} x_{\texttt{I}} \up$ captures the fraction of the recovered population who do not adopt any protection and get \um{reinfected}, and the term $\gamma x_{\texttt{I}}$ is the fraction of the infected population who naturally recover. The above dynamics satisfies $\dot{x}_{\texttt{S}}(t) + \dot{x}_{\texttt{I}}(t) + \dot{x}_{\texttt{R}}(t) = 0$ for almost every instant of time $t$, and since the states represent \um{fractions} of the population, they also satisfy $x_{\texttt{S}}(t) + x_{\texttt{I}}(t) + x_{\texttt{R}}(t) = 1$ for every instant of time $t$, when the initial state vector also satisfies this condition (for details, see Lemma \ref{lem:invariant-set}). 
\end{remark}

In our model, the control inputs include behavioral measures (represented by $1 - u_{\mathtt{P}}$), such as protective behaviors or social distancing, and medical interventions (represented by $u_{\mathtt{V}}$), such as vaccination. Thus, our model accounts for both medical and non-medical interventions available during an epidemic. We impose limitations on both types of inputs to prevent trivial solutions that might arise from an unlimited supply of protection and vaccination. In the above setting, $\up = 0$ implies that the susceptible or recovered individuals adopt complete a protection and they do not bear the risk of getting infected. In order to rule out this impractical corner case, we assume that $\up$ is always bounded from below by a lower bound $u_{\texttt{Pmin}}>0$. \um{Additionally, we assume that $u_{\texttt{P}} \leq 1$ with the upper bound chosen to signify that $\beta$ and $\hat{\beta}$ denote the infection rates in the absence of any protective action.} In addition, we assume that the vaccination rate satisfies $0 \leq u_{\texttt{V}} \leq u_{\texttt{Vmax}} < 1$, where we have limited the upper threshold by excluding~$1$, as $\uv = u_{\texttt{Vmax}} = 1$ would imply vaccinating the entire susceptible fraction of population in one go, which is not practical. 

\begin{remark}
When $\hat{\beta} = 0$ (i.e., recovered individuals do not get \um{reinfected}), then the model reduces to the SIR epidemiological model \um{(see \cite{authour31})}. Similarly, as mentioned in \cite{authour31}, when $\beta = \hat{\beta}$ (i.e., the infection rate of susceptible and recovered individuals coincide), then we recover the Susceptible-Infected-Susceptible (SIS) epidemiological model. Thus, the SIRI epidemiological model studied \um{in this paper} is a strict generalization of both the SIS and SIR \um{epidemiological models} (see, e.g., \cite{authour26}). 
\end{remark}

Before stating the optimal control problem studied in this paper, \um{we} first establish certain theoretical properties of the controlled SIRI epidemiological model when the control inputs are exogenous constants. First, we investigate the equilibria of \um{its dynamics} and their associated stability properties. \um{When $u_{\texttt{V}} = 0$, the dynamics in \eqref{system_eq} is an instance of the SIRI model without any explicit control input, with the infection rates effectively being $\beta \up$ and $\hat{\beta} \up$, respectively.} The equilibria and their stability properties follow from analogous results established for the classical SIRI \um{epidemiological model} in \cite{authour19}. Therefore, we focus on the case where the constant steady-state inputs are defined by $u_{\texttt{V}} = \uv^{\texttt{eq}}$, where $0 < \uv^{\texttt{eq}} \leq u_{\texttt{Vmax}}$, and $\up = \up^{\texttt{eq}}$, where $u_{\texttt{Pmin}} \leq \up^{\texttt{eq}} \leq 1$.

By equating the right hand side of \eqref{system_eq} to zero, we observe the existence of two equilibrium points:
\begin{enumerate}[(i)]
     \item The disease free equilibrium point $E_{\mathtt{DFE}}$ for  $(x^{\texttt{eq}}_\texttt{S} = 0, x^{\texttt{eq}}_\texttt{I} = 0, x^{\texttt{eq}}_\texttt{R} = 1)$, which always exists;
     \item The endemic equilibrium point $E_{\mathtt{EE}}$ for $(x^{\texttt{eq}}_\texttt{S} = 0, x^{\texttt{eq}}_\texttt{I} = 1 - \frac{\gamma}{\hat{\beta}  u^{\texttt{eq}}_\texttt{P}}, x^{\texttt{eq}}_\texttt{R} = \frac{\gamma}{\hat{\beta} u^{\texttt{eq}}_\texttt{P}})$, which exists when $\gamma < \hat{\beta}  u^{\texttt{eq}}_\texttt{P}$.
\end{enumerate}
The following result establishes their local stability properties. 

\begin{prop}[Local asymptotic stability of the equilibrium points]
For the controlled SIRI epidemiological model \eqref{system_eq} with \um{$u^{\textnormal{\texttt{eq}}}_\textnormal{\texttt{V}} > 0$}, we have the following: 
\begin{enumerate}[(i)]
    \item The disease free equilibrium point is locally asymptotically stable when $\gamma > \hat{\beta} \um{u^{\textnormal{\texttt{eq}}}_\textnormal{\texttt{P}}}$;
    \item The endemic equilibrium point is locally asymptotically stable when $\gamma < \hat{\beta} \um{u^{\textnormal{\texttt{eq}}}_\textnormal{\texttt{P}}}$.
\end{enumerate}
\label{prop:eq_pts}
\end{prop}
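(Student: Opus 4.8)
The plan is to exploit the conservation law $\xs + \xI + \xr = 1$ from Lemma \ref{lem:invariant-set} to reduce the three-dimensional dynamics \eqref{system_eq} to a planar system, and then to apply Lyapunov's indirect method (linearization) at each equilibrium. Since $\xr = 1 - \xs - \xI$ on the invariant simplex, I would substitute this into the first two equations of \eqref{system_eq} and work with the reduced state $(\xs, \xI)$. This is essential because summing the three rows of \eqref{system_eq} gives $\dot{\xs} + \dot{\xI} + \dot{\xr} = 0$, so the full $3 \times 3$ Jacobian always carries a spurious zero eigenvalue transverse to the simplex; the planar reduction removes it and provides the natural setting for assessing stability within the physically relevant state space.

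Next I would compute the $2 \times 2$ Jacobian of the reduced vector field with the constant inputs $\uv = \uv^{\texttt{eq}}$ and $\up = \up^{\texttt{eq}}$ held fixed, and evaluate it at each equilibrium. The favorable structural feature I expect is that both equilibria have $\xs^{\texttt{eq}} = 0$, so the mixed partial $\partial \dot{\xs} / \partial \xI = -\beta \xs^{\texttt{eq}} \up^{\texttt{eq}}$ vanishes; the linearization is therefore triangular and its eigenvalues are simply the two diagonal entries, which sidesteps any characteristic-polynomial analysis.

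Reading off these entries, at $E_{\mathtt{DFE}} = (0,0)$ I expect the eigenvalues $-\uv^{\texttt{eq}}$ and $\hat{\beta} \up^{\texttt{eq}} - \gamma$: the first is strictly negative since $\uv^{\texttt{eq}} > 0$, and the second is negative precisely when $\gamma > \hat{\beta} \up^{\texttt{eq}}$, giving part (i). At $E_{\mathtt{EE}}$, substituting $\xI^{\texttt{eq}} = 1 - \gamma/(\hat{\beta}\up^{\texttt{eq}})$, I expect one eigenvalue $-\beta \xI^{\texttt{eq}} \up^{\texttt{eq}} - \uv^{\texttt{eq}}$, strictly negative whenever the equilibrium exists (so that $\xI^{\texttt{eq}} > 0$), and a second eigenvalue that simplifies from $\hat{\beta}\up^{\texttt{eq}}(1 - 2\xI^{\texttt{eq}}) - \gamma$ down to $\gamma - \hat{\beta}\up^{\texttt{eq}}$, negative precisely when $\gamma < \hat{\beta}\up^{\texttt{eq}}$, yielding part (ii). As an alternative, one could equivalently verify the planar Routh--Hurwitz conditions (negative trace, positive determinant), which produce the same sign thresholds.

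I do not anticipate a serious analytical obstacle, since the triangular structure makes the eigenvalue computation transparent; the point requiring care is the reduction itself. One must justify that local asymptotic stability of the reduced planar system on the invariant simplex is equivalent to local asymptotic stability of the constrained three-dimensional dynamics, and one must keep track of the fact that both equilibria lie on the boundary $\xs = 0$, so the relevant notion is stability \emph{relative to} the invariant set rather than in all of $\mathbb{R}^3$. The algebraic collapse of the second $E_{\mathtt{EE}}$ eigenvalue is the only nonroutine step, and it is a short calculation.
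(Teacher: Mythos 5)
Your proposal is correct and follows essentially the same route as the paper's proof: reduce to the planar $(\xs,\xI)$ system via $\xr = 1-\xs-\xI$, linearize with the constant inputs, and read off the eigenvalues from the lower-triangular Jacobian at each equilibrium (the paper's matrices $J(E_{\mathtt{DFE}})$ and $J(E_{\mathtt{EE}})$ are exactly your triangular linearizations, with the second eigenvalue at $E_{\mathtt{EE}}$ collapsing to $\gamma - \hat{\beta}\up^{\texttt{eq}}$ as you predict). Your added remarks on stability relative to the invariant simplex and the Routh--Hurwitz alternative go slightly beyond what the paper records but do not change the argument.
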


\begin{proof}
Since \um{$ x_{\texttt{S}}(t) + x_{\texttt{I}}(t) + x_{\texttt{R}}(t) = 1$ for every instant of time $t$} (see Lemma \ref{lem:invariant-set}), we equivalently consider the dynamics involving only the two state variables $x_\texttt{S}$ and $x_\texttt{I}$\um{, by} expressing $x_\texttt{R} = 1 - x_\texttt{S} - x_\texttt{I}$. Thus, the \um{dynamics} \eqref{system_eq} reduces to the following:
\begin{align*}
\dot{x}_{\texttt{S}}(t)& =-\beta x_{\texttt{S}}(t) x_{\texttt{I}}(t) \up(t) - \xs(t) \uv(t),
\\ \dot{x}_{\texttt{I}}(t) &= \beta x_{\texttt{S}}(t) x_{\texttt{I}}(t) \up(t) + \hat{\beta}(1 - \xs(t) - \xI(t)) x_{\texttt{I}}(t) \up(t) - \gamma x_{\texttt{I}}(t),
\end{align*}
the Jacobian matrix of which is given by the following:
\begin{equation*}
     J(\xs, \xI, \up, \uv) = \begin{bmatrix}
         -\beta x_\texttt{I} \up - u_\texttt{V} & -\beta x_\texttt{S} \up \\
        \;\;\; (\beta - \hat{\beta}) x_\texttt{I} \up \;\;\;\;\;\;\;\;\; & \beta x_\texttt{S} \up + \hat{\beta} (1 - x_\texttt{S} - 2 x_\texttt{I}) \up - \gamma
     \end{bmatrix}.
\end{equation*}
 First, we investigate the local asymptotic stability of the disease-free equilibrium point\um{. In this case, the Jacobian matrix} is given by the following:
 \begin{equation*}
     \um{J(E_{\mathtt{DFE}})} = \begin{bmatrix}
         - u^{\texttt{eq}}_\texttt{V} & 0 \\
         0 & \hat{\beta} \up^{\texttt{eq}} - \gamma
     \end{bmatrix}.
 \end{equation*} 
Since $u^{\texttt{eq}}_\texttt{V} > 0$\um{,} both the eigenvalues of the above matrix\um{, are strictly negative when} $\gamma > \hat{\beta} \up^{\texttt{eq}}$.\footnote{Intuitively, when $\gamma < \hat{\beta} u^{\texttt{eq}}_\texttt{P}$, the recovery rate of the infected fraction of population is less than the rate of reinfection of the recovered fraction of population times the fraction of recovered people not adopting protection, leading to the disease free equilibrium being unstable.} 
Next, we investigate the local asymptotic stability of the endemic equilibrium point. The Jacobian matrix in this case, is given by
 \begin{equation*}
     \um{J(E_{\mathtt{EE}})} = \begin{bmatrix}
        -\beta (1- \frac{\gamma}{\hat{\beta} \up^{\texttt{eq}}}) \up^{\texttt{eq}} - u^{\texttt{eq}}_\texttt{V} & 0 \\
         (\beta -\hat{\beta}) (1- \frac{\gamma}{\hat{\beta} \up^{\texttt{eq}}})\up^{\texttt{eq}} \;\;\;\;\;\; & -\hat{\beta} \up^{\texttt{eq}} + \gamma
     \end{bmatrix}.
 \end{equation*}  
It is easy to see that both the eigenvalues of the above matrix, are strictly negative when $\gamma < \hat{\beta} \up^{\texttt{eq}}$. This concludes the proof.
\end{proof}
The following lemma supports the consideration of the SIRI epidemiological model.
\begin{lemma}[Positive invariant set of the controlled SIRI epidemiological model]
\label{lem:invariant-set}
The set $S \coloneqq \{(x_{\textnormal{\texttt{S}}},x_{\textnormal{\texttt{I}}},x_{\textnormal{\texttt{R}}}) : (x_{\textnormal{\texttt{S}}},x_{\textnormal{\texttt{I}}},x_{\textnormal{\texttt{R}}}) \in [0,1] \times [0,1] \times [0,1]\}$ is positively invariant, with respect to any unique global solution for the SIRI epidemic dynamics \eqref{system_eq}.
\end{lemma}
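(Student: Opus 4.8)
The plan is to prove the two facts that together confine the trajectory to $S$ --- conservation of the total population mass and nonnegativity of each compartment --- after first securing well-posedness. Since the right-hand side of \eqref{system_eq} is polynomial in $(\xs,\xI,\xr)$ and affine in the inputs, and the controls $\up,\uv$ are essentially bounded Lebesgue measurable, the vector field satisfies the Carath\'eodory conditions and is locally Lipschitz in the state uniformly over bounded control values. Hence for any initial condition in $S$ there is a unique absolutely continuous (Carath\'eodory) solution on a maximal interval $[0,t_{\max})$ that satisfies the dynamics for almost every $t$; I would work on this interval throughout.

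Next I would add the three equations to obtain $\dot{\xs}+\dot{\xI}+\dot{\xr}=0$ almost everywhere, so that $\xs+\xI+\xr$ is constant and equals $1$ on $[0,t_{\max})$ whenever it does so initially. For nonnegativity I would exploit the multiplicative structure rather than invoke a general invariance theorem. The infected equation reads $\dot{\xI}=g(t)\,\xI$ with $g(t):=\beta\xs\up+\hat{\beta}\xr\up-\gamma$, a homogeneous linear ODE in $\xI$; since $g\in L^1$ on compact subintervals, $\xI(t)=\xI(0)\exp\big(\int_0^t g(s)\,ds\big)\ge 0$, with no sign assumption on $\xs,\xr$ needed. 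The same device gives $\xs(t)=\xs(0)\exp\big(-\int_0^t(\beta\xI\up+\uv)\,ds\big)\ge 0$. Finally I would write $\dot{\xr}=-a(t)\xr+b(t)$ with $a:=\hat{\beta}\xI\up\ge 0$ and $b:=\xs\uv+\gamma\xI\ge 0$, where the nonnegativity of $a,b$ now follows from the already-established $\xs,\xI\ge 0$ and the nonnegative controls; the variation-of-constants formula then yields $\xr(t)\ge 0$.

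With every coordinate nonnegative and summing to $1$, each lies in $[0,1]$, so the trajectory remains in $S$ on $[0,t_{\max})$; compactness of $S$ rules out finite-time escape, whence $t_{\max}=\infty$ and the global solution stays in $S$, giving positive invariance. The main technical care is not any single computation but handling the merely measurable controls within the Carath\'eodory framework (all identities holding only almost everywhere) and, crucially, ordering the nonnegativity arguments --- $\xI$ and $\xs$ first, then $\xr$ --- so that the coefficients $a,b$ in the $\xr$-equation are already known to be nonnegative, thereby avoiding circular reasoning. As an alternative to variation of constants one could invoke Nagumo's subtangentiality condition, checking that on each face $\{x_j=0\}$ the field points into $S$ (indeed $\dot{\xs}=\dot{\xI}=0$ there, while $\dot{\xr}=\xs\uv+\gamma\xI\ge 0$), but the explicit-solution route is more transparent given the factorized structure of \eqref{system_eq}.
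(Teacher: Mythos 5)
Your proposal is correct and follows essentially the same route as the paper's own proof: local well-posedness under Carath\'eodory conditions, explicit exponential and variation-of-constants representations to obtain nonnegativity of $\xs$ and $\xI$ first and then $\xr$ (the same ordering the paper uses), conservation of $\xs+\xI+\xr$ to pin the sum at $1$, and compactness of the invariant set $S$ to rule out finite-time escape and extend the solution globally. Even your closing alternative via Nagumo's subtangentiality condition mirrors the remark the paper makes immediately after its proof.
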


\begin{proof}
First, we show that local solutions exist and are also unique on a sufficiently small time-interval for the SIRI epidemic dynamics \eqref{system_eq}. To this end, fix a sufficiently small real number $\varepsilon>0$, and for any given Lebesgue measurable control inputs $\textbf{u} = (\up, \uv): [0, \varepsilon]\rightarrow [u_{{\texttt{Pmin}}},1]\times [0,u_{{\texttt{Vmax}}}]$, let us rewrite \eqref{system_eq} as follows:
\begin{equation*}
    \dot{\textbf{x}} = \textbf{F}(\textbf{x}, \textbf{u}(t)) \coloneqq \textbf{G}(t, \textbf{x}),~ \textbf{x}(0)=\textbf{x}_{0},
    \label{eq:compact_sys_eq}
\end{equation*}
where $\textbf{x} \coloneqq (\xs, \xI, \xr) \in \mathbb{R}^{3}$ is the state vector and the function $\textbf{G}: \mathbb{R} \times \mathbb{R}^3 \rightarrow \mathbb{R}^3$ is given by the following:
\begin{equation*}
\textbf{G}(t, \textbf{x}) \coloneqq \underbrace{\begin{bmatrix}
    0\\
    -\gamma \xI\\
    \gamma \xI
\end{bmatrix}}_{\textbf{f}(\textbf{x})} + \underbrace{\begin{bmatrix}
    -\beta \xs \xI\\
    \beta \xs \xI + \hat{\beta} \xr \xI\\
    -\hat{\beta} \xI \xr
\end{bmatrix}}_{\textbf{g}_{1}(\textbf{x})}\up(t) + \underbrace{\begin{bmatrix}
    -\xs\\
    0\\
    \xs
\end{bmatrix}}_{\textbf{g}_{2}(\textbf{x})}\uv(t).
\end{equation*}
Now, let $K \coloneqq \{(t, \textbf{x}) : 0 \leq t \leq \varepsilon, |\textbf{x} - \textbf{x}_{0}| \leq \varepsilon\}$ be a cylinder in $\mathbb{R} \times \mathbb{R}^{3}$.\footnote{\um{The norm of a vector $\textbf{x} \in \mathbb{R}^{3}$, is denoted by $|\textbf{x}|$.}} It is now easy to verify that the function $\textbf{G}$, satisfies the following conditions:
\begin{enumerate}
\item[(i)] {For almost every $t \in [0, \varepsilon]$, the mapping $\textbf{x}\mapsto \textbf{G}(t, \textbf{x})$ is continuous, and for every $\textbf{x} \in \mathrm{B}_{\varepsilon}(\textbf{x}_{0})$, the mapping $t\mapsto \textbf{G}(t, \textbf{x})$ is Lebesgue measurable;\footnote{The closed ball of radius $r>0$ in $\mathbb{R}^{3}$, centered at $\textbf{x} \in \mathbb{R}^{3}$, is denoted by $\mathrm{B}_{r}(\textbf{x})$.}}
\item[(ii)] {There exists a constant $C_{K}>0$ such that:
\begin{equation*}
    |\textbf{G}(t, \textbf{x})| \leq C_{K},
\end{equation*}
holds for almost every $t \in [0, \varepsilon]$ and for every $\textbf{x} \in \mathrm{B}_{\varepsilon}(\textbf{x}_{0})$. Moreover, there also exists a constant $L_{K}>0$ such that the following inequality:
\begin{equation*}
    |\textbf{G}(t, \textbf{x}) - \textbf{G}(t, \textbf{y})| \leq L_{K} |\textbf{x} - \textbf{y}|,
\end{equation*}
holds for almost every $t \in [0, \varepsilon]$ and for every $\textbf{x}, \textbf{y} \in \mathrm{B}_{\varepsilon}(\textbf{x}_{0})$.}
\end{enumerate}
Indeed, to verify the first claim in item (ii) stated above, one can obtain the following:
\begin{equation*}
    |\textbf{G}(t, \textbf{x})| \leq \sup_{\textbf{x} \in \mathrm{B}_{\varepsilon}(\textbf{x}_{0})} |\textbf{f}(\textbf{x})| + \sup_{\textbf{x} \in \mathrm{B}_{\varepsilon}(\textbf{x}_{0})} |\textbf{g}_{1}(\textbf{x})| + \sup_{\textbf{x} \in \mathrm{B}_{\varepsilon}(\textbf{x}_{0})} |\textbf{g}_{2}(\textbf{x})|u_{{\texttt{Vmax}}},
\end{equation*}
which holds for almost every $t \in [0, \varepsilon]$ and for every $\textbf{x} \in \mathrm{B}_{\varepsilon}(\textbf{x}_{0})$. Keeping in mind, the fact that the functions $\textbf{f}, \textbf{g}_{1}, \textbf{g}_{2}$ are of class $C^{1}$, one can now obtain the desired result by invoking Weierstrass' theorem. To verify the second claim in item (ii) stated above, one can obtain the following inequality:
\begin{equation*}
    |\textbf{G}(t, \textbf{x})-\textbf{G}(t, \textbf{y})| \leq |\textbf{f}(\textbf{x}) - \textbf{f}(\textbf{y})| + |\textbf{g}_{1}(\textbf{x}) - \textbf{g}_{1}(\textbf{y})| + |\textbf{g}_{2}(\textbf{x}) - \textbf{g}_{2}(\textbf{y})|u_{{\texttt{Vmax}}},
\end{equation*}
which holds for almost every $t \in [0, \varepsilon]$ and for every $\textbf{x}, \textbf{y} \in \mathrm{B}_{\varepsilon}(\textbf{x}_{0})$. Keeping in mind, the facts that the functions $\textbf{f}, \textbf{g}_{1}, \textbf{g}_{2}$ are of class $C^{1}$ and also that the set $\mathrm{B}_{\varepsilon}(\textbf{x}_{0})$ is compact and convex, one can now obtain the desired result. By appropriately modifying some of the steps given in the proof of \cite[Theorem~2.2.1]{authour32}, one can now deduce that local solutions exist for \eqref{system_eq} on the time-interval $[0, \epsilon]$. In addition, from \cite[Theorem 2.1.3]{authour32}, one can also deduce the uniqueness of such local solutions of \eqref{system_eq} on the time-interval $[0, \hat \epsilon]$, where $0 < \hat \varepsilon \leq \varepsilon$.

Next, we verify the positive invariance of the set $S$ with respect to the unique local solution $\textbf{x} = (\xs, \xI, \xr): [0, \hat \varepsilon] \rightarrow \mathbb{R}^3$ of the SIRI epidemic dynamics \eqref{system_eq}. To this end, from \eqref{system_eq}, we have that
\begin{align}
\label{eq:x=}
\begin{rcases}
\xs(t) = \exp \left(-\displaystyle \int_{0}^{t} \Big(\beta \xI(\tau) \up(\tau) + \uv(\tau) \Big) \, d\tau\right) \xs(0),\\
\xI(t) = \exp \left(\displaystyle \int_{0}^{t} \Big(\big(\beta \xs(\tau) + \hat{\beta} \xr(\tau)\big)\up(\tau) - \gamma \Big) \, d\tau \right) \xI(0),\\
\xr(t) = \exp \left(\displaystyle \int_{0}^{t} \left(-\hat{\beta} \xI(\tau) \up(\tau) \right) \, d\tau \right) \bigg[\xr(0)\\
\quad \quad\quad + \displaystyle \int_{0}^{t}  \exp \left(\displaystyle \int_{0}^{\tau} \left(\hat{\beta} \xI(s) \up(s) \right) \, ds \right) \Big(\xs(\tau) \uv(\tau) + \gamma \xI(\tau)\Big) \, d\tau \bigg],
\end{rcases}
\end{align}
for all $t \in [0, \hat \varepsilon]$. Now, observe that the initial conditions $\xs(0), \xI(0), \textnormal{ and } \xr(0)$ represent the initial fractions of the population who are susceptible, infected, and recovered, respectively, and therefore satisfy the following constraints: $\xs(0), \xI(0), \xr(0) \geq 0$ and $\xs(0) + \xI(0) + \xr(0) = 1$. Since, $\xs(0), \xI(0)\geq 0$, it follows from the first two equations in \eqref{eq:x=} that $\xs(t), \xI(t)\geq 0$ for all $t \in [0, \hat \varepsilon]$, and since $\xr(0) \geq 0$, $\uv(t)\geq 0$ for all $t \in [0, \hat \varepsilon]$ and $\gamma \geq 0$, it now follows from the third equation in \eqref{eq:x=} that $\xr(t)\geq 0$ for all $t \in [0, \hat \varepsilon]$. Now, from the dynamics \eqref{system_eq}, it follows that the relation $\dot{x}_{\mathtt{S}}(t) + \dot{x}_{\mathtt{I}}(t) + \dot{x}_{\mathtt{R}}(t) = 0$ is satisfied for almost every $t \in [0, \hat \varepsilon]$, which in turn implies that the relation $\xs(t) + \xI(t) + \xr(t)= 1$ is satisfied for all $t \in [0, \hat \varepsilon]$. Overall, the unique local solution $\textbf{x} = (\xs, \xI, \xr): [0, \hat \varepsilon] \rightarrow \mathbb{R}^3$ of the SIRI epidemic dynamics \eqref{system_eq}, satisfies the following constraints: $\xs(t), \xI(t), \xr(t) \geq 0$ and $\xs(t) + \xI(t) + \xr(t) = 1$ for all $t \in [0, \hat \varepsilon]$, from which it follows that $(\xs(t), \xI(t), \xr(t)) \in [0,1] \times [0,1] \times [0,1]$ for all $t \in [0, \hat \varepsilon]$.

Finally, we show that any unique right-maximal solution $\textbf{x} = (\xs, \xI, \xr): [0, T) \rightarrow \mathbb{R}^3$ of the SIRI epidemic dynamics \eqref{system_eq}, where the time $T > 0$, can be globally extended (i.e., it is possible to show that this holds for $T = \infty$). To this end, let us assume that $T < \infty$; then, by appropriately modifying some of the steps given in the proof of \cite[Theorem 2.1.4]{authour32}, one can deduce the following relation:
\begin{equation}
    \lim_{t \uparrow T} \left(|\textbf{x}(t)| + \frac{1}{d((t, \textbf{x}(t)), \partial \Omega)} \right) = \infty,
    \label{eq:gcc}
\end{equation}
where the set $\Omega \coloneqq [0, T) \times \mathbb{R}^{3}$ is an open set in $[0, \infty] \times \mathbb{R}^{3} \subset \mathbb{R} \times \mathbb{R}^{3}$, the notation $\partial \Omega$ denotes its boundary (in the set $[0, \infty] \times \mathbb{R}^{3}$), and the distance of a pair of points $(t, \textbf{y}) \in [0, \infty] \times \mathbb{R}^{3}$ to a set $K \subseteq [0, \infty] \times \mathbb{R}^{3}$ is given by $d((t, \textbf{y}), K) \coloneqq \inf_{(\bar t, \bar {\textbf{y}}) \in K} |(t, \textbf{y}) - (\bar t, \bar {\textbf{y}})|$. Using the facts that $S$ is a compact set in $\mathbb{R}^{3}$ and is positively invariant with respect to the unique right-maximal solution $\textbf{x} = (\xs, \xI, \xr): [0, T) \rightarrow \mathbb{R}^3$ of the SIRI epidemic dynamics \eqref{system_eq}, together with the fact that the boundary $\partial \Omega = (\{T\} \times \mathbb{R}^{3}) \cup ([0, T] \times \emptyset)$, we now arrive at a contradiction in view of \eqref{eq:gcc}. This completes the proof.
\end{proof}
\begin{remark}
    Note that the positive invariance of the set $S$ in the proof of Lemma \ref{lem:invariant-set} can also be shown using Nagumo's theorem adapted to control systems (see, e.g., \cite[Theorem $4.11$]{authour33}).
\end{remark}
\raggedbottom

\section{Optimal control problem}\label{sec3}
Now, we consider the following optimal control problem:
\begin{equation}
\label{eq:ocp_original}
\begin{rcases}
\begin{aligned}
    \inf_{\substack{\up(\cdot)\in L^{\infty}([0,T];\mathbb{R}) \\ \uv(\cdot)\in L^{\infty}([0,T];\mathbb{R})}} &\int_{0}^{T} \um{[}c_{\texttt{P}} (1-\up(t)) (x_{\texttt{S}}(t) + x_{\texttt{R}}(t))+c_{\texttt{V}} u_{{\texttt{V}}}(t) x_{\texttt{S}}(t)+c_{{\texttt{I}}}x_{\texttt{I}}(t)\um{]} \,dt,\\\
    \textrm{s.t.} \hspace{26pt} &\dot{x}_{\texttt{S}}(t) =-\beta x_{\texttt{S}}(t) x_{\texttt{I}}(t) \up(t) - \xs(t) \uv(t),
    \\& \dot{x}_{\texttt{I}}(t) = \beta x_{\texttt{S}}(t) x_{\texttt{I}}(t) \up(t) + \hat{\beta}x_{\texttt{R}}(t) x_{\texttt{I}}(t) \up(t) - \gamma x_{\texttt{I}}(t), 
    \\& \dot{x}_{\texttt{R}}(t) =-\hat{\beta} x_{\texttt{R}}(t) x_{\texttt{I}}(t) \up(t)  +  \xs(t) \uv(t) + \gamma x_{\texttt{I}}(t),
    \\& (\xs(0), \xI(0), \xr(0)) \in [0, 1] \times [0, 1] \times [0, 1],
    \\& u_{{\texttt{Pmin}}} \leq \up(t) \leq 1~\text{for a.e.}~t\in [0,T],
    \\& 0 \leq \uv(t) \leq u_{{\texttt{Vmax}}}~\text{for a.e.}~t\in [0,T],
    \end{aligned}
\end{rcases}
\end{equation}
where $u_{{\texttt{Pmin}}} > 0$ represents the minimum fraction of the susceptible or recovered sub-populations who remain unprotected, and $u_{{\texttt{Vmax}}} < 1$ denotes an upper bound on the \um{fraction} of the \um{total} population that can be vaccinated for a given time period. The individual weighing terms in the running cost \eqref{eq:ocp_original} are as follows: $\cP$ captures the cost incurred due to the protection adopted by the susceptible and recovered individuals, $\cV$ captures the cost incurred due to vaccination by the susceptible \um{individuals}, and $\cI$ is the disease cost or the cost incurred on being infected.

\subsection{Equivalent formulation in Mayer form}
In order to exploit the results from the optimal control theory in our subsequent analysis, we convert the optimal control problem defined by \eqref{eq:ocp_original} into the Mayer form. In the Mayer form, the cost functional solely consists of the terminal cost. This requires appending an additional state $x_\texttt{C}$ to our dynamics which captures the running cost, and whose time-evolution satisfies the following dynamics:
\begin{equation}
    \dot{x}_\texttt{C}(t)=c_{\texttt{P}} (1-\up(t)) (x_{\texttt{S}}(t) + x_{\texttt{R}}(t)) +c_{\texttt{V}} u_{{\texttt{V}}}(t) x_{\texttt{S}}(t) + c_{{\texttt{I}}}x_{\texttt{I}}(t),~{x}_\texttt{C}(0)=0,
    \label{eq:xc}
\end{equation}
for almost every time instant $t$. In addition, we note that any one of the three epidemic states can be~expressed in terms of the other two states since the SIRI dynamics \eqref{system_eq} satisfies $x_\texttt{S}(t)+x_\texttt{I}(t)+x_\texttt{R}(t)=1$ for every instant of time $t$ and $\dot{x}_\texttt{S}(t)+\dot{x}_\texttt{I}(t)+\dot{x}_\texttt{R}(t)=0$ for almost every instant of time $t$. As a result, the epidemic dynamics can be expressed in terms of only two state variables. We express $x_\texttt{I}=1-x_\texttt{S}-x_\texttt{R}$ and omit the variable $x_\texttt{I}$ from the epidemic dynamics. By introducing the state vector $\textbf{z}=({x}_\texttt{C}, x_\texttt{S}, x_\texttt{R}) \in \mathbb{R}^{3}$, the optimal control problem defined by \eqref{eq:ocp_original} can now be written in the Mayer form as follows:
\begin{equation}
\label{eq:ocp}
\begin{rcases}
\begin{aligned}
    \inf_{\substack{\up(\cdot)\in L^{\infty}([0,T];\mathbb{R}) \\ \uv(\cdot)\in L^{\infty}([0,T];\mathbb{R})}} &{x}_\texttt{C}(T) \\\
    \textrm{s.t.} \hspace{26pt} &\dot{\textbf{z}}=\textbf{f}(\textbf{z})+\gp(\textbf{z}) \up +\gv(\textbf{z}) \uv,
    \\& \textbf{z}(0)\in \{0\}\times [0, 1] \times [0, 1],
    \\& u_{{\texttt{Pmin}}} \leq \up \leq 1~ \textrm{ for a.e.}~ t \in [0, T],
    \\& 0 \leq \uv \leq u_{{\texttt{Vmax}}}~ \textrm{ for a.e.}~ t \in [0, T],
    \end{aligned}
\end{rcases}
\end{equation}
where the drift and control vector fields are given by the following:
\begin{equation}
    \textbf{f}(\textbf{z})=\begin{bmatrix}
   \!\!\!\!\!\!\!\!&\cP (x_\texttt{S} + x_\texttt{R}) +c_\texttt{I}(1-x_\texttt{S}-x_\texttt{R})\\
    \!\!\!\!\!\!\!\!&0\\
    \!\!\!\!\!\!\!\!&\gamma (1-x_\texttt{S}-x_\texttt{R})
\end{bmatrix},~
\gp(\textbf{z})=\begin{bmatrix}
    \!\!\!\!\!\!\!\!&-\cP (x_\texttt{S} + \xr) \\
    \!\!\!\!\!\!\!\!&-\beta x_\texttt{S} (1-x_\texttt{S}-x_\texttt{R})\\
    \!\!\!\!\!\!\!\!&-\hat{\beta} \xr (1-x_\texttt{S}-x_\texttt{R}) \nonumber
\end{bmatrix},~
\gv(\textbf{z})=\begin{bmatrix}
    \!\!\!\!\!\!\!\!&c_\texttt{V} x_\texttt{S}\\
    \!\!\!\!\!\!\!\!&-x_\texttt{S}\\
    \!\!\!\!\!\!\!\!&x_\texttt{S}
\end{bmatrix}\um{.}
\label{eq:matrix2}
\end{equation}


\subsection{Existence of an optimal control input}
Now, we establish the existence of a solution for the optimal control problem defined by \eqref{eq:ocp}. To this end, we leverage Filippov’s theorem, which is stated below for the reader's convenience.
\begin{theorem}(Filippov’s theorem, \cite[Section 4.5]{authour35})
    \label{Filippov}
Consider a controlled dynamical system:
\begin{equation}
\label{eq:ft}
    \dot{\textbf{x}}(t) = \textbf{f}(t, \textbf{x}(t), \textbf{u}(t)),~\textbf{x}(0) = \textbf{x}_0,
\end{equation}
where $\textbf{x}(t) \in \mathbb{R}^{n}$ and $\textbf{u}(t) \in U \subset \mathbb{R}^{m}$. Assume that the solutions of \eqref{eq:ft} exist on a given time-interval $[0, T]$ for all control inputs $\textbf{u}(\cdot)$. In addition, assume that for every pair $(t, \textbf{x}) \in [0, T] \times \mathbb{R}^{n}$, the set $\{ f(t, \textbf{x}, \textbf{u}) : \textbf{u} \in U\}$ is compact and convex. Then, the reachable set $R^t(\textbf{x}_0)$ is compact for each $t \in [0, T]$.\footnote{The reachable set at time $t>0$, starting from $\textbf{x}_0 \in \mathbb{R}^{n}$, is defined as follows:
\begin{equation*}
    R^t(\textbf{x}_0) \coloneqq \{\textbf{x}(t) : \textbf{x}(\cdot; \textbf{x}_0, \textbf{u})~\text{is a solution of}~\eqref{eq:ft}~\text{defined over the time-interval $[0, t]$, corresponding to an admissible control input}~\textbf{u}(\cdot)\}.
\end{equation*}}
\end{theorem}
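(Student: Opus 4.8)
The plan is to show that $R^t(\textbf{x}_0)$ is both bounded and closed, and hence compact as a subset of $\mathbb{R}^{n}$. Throughout, I would work with the differential inclusion $\dot{\textbf{x}} \in F(t,\textbf{x})$, where $F(t,\textbf{x}) \coloneqq \{\textbf{f}(t,\textbf{x},\textbf{u}) : \textbf{u} \in U\}$ is, by hypothesis, compact and convex for every pair $(t,\textbf{x})$. Every admissible trajectory solves this inclusion, and (this is where the convexity pays off) a measurable-selection argument will let me recover an admissible control from any absolutely continuous solution of the inclusion. I will also rely on continuity of $\textbf{f}$, which is standard in this setting and makes the set-valued map $F$ have a closed graph.

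First, fix $t \in [0,T]$ and take an arbitrary sequence $\textbf{x}_k(t) \in R^t(\textbf{x}_0)$, each realized by a trajectory $\textbf{x}_k(\cdot)$ driven by an admissible control $\textbf{u}_k(\cdot)$. Since the hypotheses guarantee that solutions exist on all of $[0,T]$ and the velocity sets $F(\tau,\textbf{x}_k(\tau))$ lie in balls of a fixed radius over the relevant compact region of state space, the trajectories $\textbf{x}_k$ are uniformly bounded and uniformly Lipschitz on $[0,T]$; in particular the family is equicontinuous. By the Arzel\`a--Ascoli theorem I would extract a subsequence (not relabeled) converging uniformly to an absolutely continuous limit $\textbf{x}^{*}(\cdot)$ with $\textbf{x}^{*}(0) = \textbf{x}_0$, which in particular gives $\textbf{x}_k(t) \to \textbf{x}^{*}(t)$.

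The crux is to show that $\textbf{x}^{*}(\cdot)$ is itself an admissible trajectory, so that $\textbf{x}^{*}(t) \in R^t(\textbf{x}_0)$. The derivatives $\dot{\textbf{x}}_k$ are uniformly bounded in $L^{\infty}([0,T];\mathbb{R}^{n})$, hence (via Dunford--Pettis) relatively weakly compact in $L^{1}$; passing to a further subsequence, $\dot{\textbf{x}}_k \rightharpoonup \textbf{v}$ weakly, and the uniform convergence $\textbf{x}_k \to \textbf{x}^{*}$ forces $\dot{\textbf{x}}^{*} = \textbf{v}$ almost everywhere. To identify the limiting velocity I would invoke Mazur's lemma: suitable convex combinations of the $\dot{\textbf{x}}_k$ converge strongly, hence pointwise almost everywhere along a subsequence, to $\dot{\textbf{x}}^{*}$. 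Because each $\dot{\textbf{x}}_k(\tau) \in F(\tau,\textbf{x}_k(\tau))$ and $F$ is convex with closed graph, these convex combinations stay close to $F(\tau,\textbf{x}^{*}(\tau))$, and in the limit $\dot{\textbf{x}}^{*}(\tau) \in F(\tau,\textbf{x}^{*}(\tau))$ for almost every $\tau$. Finally, Filippov's measurable-selection (implicit function) lemma produces a Lebesgue measurable control $\textbf{u}^{*}(\cdot)$ with values in $U$ such that $\dot{\textbf{x}}^{*}(\tau) = \textbf{f}(\tau,\textbf{x}^{*}(\tau),\textbf{u}^{*}(\tau))$ almost everywhere, certifying that $\textbf{x}^{*}$ is admissible. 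This establishes closedness, and together with the uniform bound it yields compactness of $R^t(\textbf{x}_0)$.

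The main obstacle is precisely the identification of the limiting velocity as an element of $F(\tau,\textbf{x}^{*}(\tau))$: this is exactly where the convexity hypothesis is indispensable, since without it a weak limit of derivatives could fall into the convex hull of the velocity set rather than the set itself, and closedness would fail. Converting the resulting differential-inclusion solution back into a genuine control also requires the measurable-selection lemma rather than a naive pointwise inverse, and some care is needed to guarantee measurability of the selected $\textbf{u}^{*}$. By contrast, the boundedness and equicontinuity steps are routine once the pointwise velocity sets are known to be uniformly bounded on the compact region swept out by the trajectories.
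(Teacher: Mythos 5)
The paper never proves this statement: it is quoted verbatim as Filippov's theorem with a citation to \cite[Section 4.5]{authour35}, and is used only as a black box in the subsequent existence proof for the optimal control problem \eqref{eq:ocp}. So there is no in-paper argument to compare yours against; the relevant benchmark is the standard textbook proof, and your proposal is essentially that proof --- pass to the differential inclusion $\dot{\textbf{x}} \in F(t,\textbf{x})$ with $F(t,\textbf{x}) = \{\textbf{f}(t,\textbf{x},\textbf{u}) : \textbf{u} \in U\}$, extract a uniform limit of trajectories by Arzel\`a--Ascoli, identify the limiting velocity through weak $L^{1}$ compactness of the derivatives plus Mazur's lemma (the point where convexity of the velocity sets is indispensable, as you correctly emphasize), and convert the resulting solution of the inclusion back into an admissible control via Filippov's measurable selection lemma. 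Your added standing assumptions (continuity of $\textbf{f}$, closed graph of $F$) are also the standing assumptions of the cited source, so invoking them is legitimate.

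One step, however, is circular as written. You obtain uniform boundedness and equicontinuity of the trajectories $\textbf{x}_k$ from the claim that the velocity sets ``lie in balls of a fixed radius over the relevant compact region of state space.'' The existence of a single compact region containing \emph{all} of the trajectories $\textbf{x}_k$ is precisely the uniform boundedness you are in the middle of proving; pointwise compactness of each $F(t,\textbf{x})$ gives a velocity bound only on regions you already know the trajectories stay in. To close this, you must either prove a separate lemma --- e.g., that global existence of solutions for \emph{every} admissible control, together with continuity of $\textbf{f}$ and compactness of the velocity sets, forces all trajectories emanating from $\textbf{x}_0$ to remain in one compact set (a comparison/maximal-interval argument) --- or impose an a priori boundedness hypothesis, as several formulations of Filippov's theorem do. In the context of this paper the issue is invisible because the SIRI state evolves in the compact simplex by Lemma \ref{lem:invariant-set}, but a self-contained proof of the theorem as stated cannot appeal to that.
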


Now, we state the following theorem. 
\begin{theorem}[Existence of an optimal control]
There exists a solution for the optimal control problem defined by \eqref{eq:ocp}.
\end{theorem}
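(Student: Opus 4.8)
The plan is to read \eqref{eq:ocp} as a Mayer problem and obtain a minimizer from two ingredients: compactness of the set of reachable terminal states, supplied by Filippov's theorem (Theorem \ref{Filippov}), together with continuity of the terminal cost $\textbf{z} \mapsto \xc$, so that Weierstrass' extreme value theorem forces the infimum to be attained. Two structural features make the hypotheses of Theorem \ref{Filippov} easy to meet: the admissible control set $U \coloneqq [u_{\texttt{Pmin}}, 1] \times [0, u_{\texttt{Vmax}}] \subset \mathbb{R}^2$ is compact and convex, and the dynamics $\dot{\textbf{z}} = \textbf{f}(\textbf{z}) + \gp(\textbf{z})\up + \gv(\textbf{z})\uv$ are affine in the pair $(\up, \uv)$.

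First I would verify the standing hypothesis of Theorem \ref{Filippov}, namely that every admissible control produces a solution defined on the whole of $[0,T]$. By Lemma \ref{lem:invariant-set}, the components $(\xs, \xI, \xr)$ remain in the compact invariant set $S = [0,1]^3$ for every admissible control; in the reduced coordinates this gives $(\xs, \xr) \in [0,1]^2$ and $\xI = 1 - \xs - \xr \in [0,1]$. Consequently the right-hand side of \eqref{eq:xc} is bounded uniformly in $t$ by $\cP + \cV u_{\texttt{Vmax}} + \cI$, using $0 \le \up, \uv, \xs, \xr, \xI \le 1$ together with the control bounds, so the appended coordinate obeys $0 \le \xc(t) \le (\cP + \cV u_{\texttt{Vmax}} + \cI)\,T$ on $[0,T]$. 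Hence the full trajectory $\textbf{z}(\cdot)$ stays in a fixed compact box, which rules out finite-time blow-up, and the local-existence argument already carried out in Lemma \ref{lem:invariant-set} then extends the solution to all of $[0,T]$.

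Next I would check the velocity-set hypothesis: for each fixed $\textbf{z}$, the set $\{\textbf{f}(\textbf{z}) + \gp(\textbf{z})\up + \gv(\textbf{z})\uv : (\up, \uv) \in U\}$ must be compact and convex. Since this set is the image of the compact convex square $U$ under the affine map $(\up, \uv) \mapsto \textbf{f}(\textbf{z}) + \gp(\textbf{z})\up + \gv(\textbf{z})\uv$, it is automatically compact and convex, so no relaxation or convexification of controls is required. Theorem \ref{Filippov} then yields that the reachable set $R^T(\textbf{z}_0)$ is compact for each fixed admissible initial state $\textbf{z}_0 \in \{0\} \times [0,1] \times [0,1]$. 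The terminal cost is the linear projection $\textbf{z} \mapsto \xc$, which is continuous; minimizing a continuous function over the compact set $R^T(\textbf{z}_0)$ attains its minimum at some $\textbf{z}^* \in R^T(\textbf{z}_0)$, and by definition of the reachable set there is an admissible control steering $\textbf{z}_0$ to $\textbf{z}^*$, which is the desired optimal control.

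The hardest part will not be the minimization step, which is routine once compactness is in hand, but rather the careful verification of the Filippov hypotheses — chiefly the global existence of solutions on the entire interval $[0,T]$ for \emph{every} essentially bounded measurable control, which hinges on the positive invariance established in Lemma \ref{lem:invariant-set}. I note that the usual crux of such existence results, the convexity of the velocity set, is here obtained for free from the affine-in-control structure. Finally, should one wish to treat the initial state as ranging over the compact set $Z_0 \coloneqq \{0\} \times [0,1] \times [0,1]$ rather than as a fixed datum, the same conclusion follows by replacing $R^T(\textbf{z}_0)$ with the total reachable set $\bigcup_{\textbf{z}_0 \in Z_0} R^T(\textbf{z}_0)$: boundedness is already established, and closedness follows by combining the Filippov closure argument with the compactness of $Z_0$ and the continuous dependence of trajectories on the initial datum.
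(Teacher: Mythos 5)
Your proposal is correct and follows essentially the same route as the paper's proof: global existence of solutions via Lemma \ref{lem:invariant-set}, compactness and convexity of the velocity sets (automatic from the affine-in-control structure over the compact convex control box), Filippov's theorem (Theorem \ref{Filippov}) to get compactness of the reachable set at time $T$, and Weierstrass' extreme value theorem applied to the continuous terminal cost $\textbf{z}\mapsto x_{\texttt{C}}$. Your additional verifications (the explicit bound on the appended cost state $x_{\texttt{C}}$ and the remark on initial states ranging over $\{0\}\times[0,1]\times[0,1]$) are details the paper leaves implicit, but they do not change the argument.
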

\begin{proof}
In view of Lemma \ref{lem:invariant-set}, it is clear that there exists a unique solution (with respect to any given initial condition and admissible control inputs) defined over the time-interval $[0, T]$ for the dynamics given in the optimal control problem defined by \eqref{eq:ocp}. Moreover, it is easy to verify that for every $\textbf{z} \in \mathbb{R}^{3}$, the set $\{\textbf{f}(\textbf{z})+\textbf{g}_{\texttt{P}}(\textbf{z}) u_{{\texttt{P}}} +\textbf{g}_{\texttt{V}}(\textbf{z}) u_{{\texttt{V}}} : (\up, \uv) \in [u_{{\texttt{Pmin}}}, 1] \times [0, u_{\texttt{Vmax}}]\}$ is a compact and convex set in $\mathbb{R}^{3}$. Now, it follows from Theorem \ref{Filippov} that the reachable set at time $T$, starting from any given initial condition, is a compact set in $\mathbb{R}^{3}$. Now, the proof can be concluded by invoking Weierstrass' extreme value theorem.
\end{proof}


\subsection{Structure of optimal control inputs}\label{sec:structure_OC}
Now, we establish the structure of the candidate optimal control inputs. To this end, we first use Pontryagin's maximum principle to single out the optimal control inputs. The Hamiltonian function which corresponds to the optimal control problem defined by \eqref{eq:ocp} is given by the following:
\begin{align}
    H(\textbf{z,u},\pmb{\lambda}) &=\langle \pmb\lambda,\textbf{f}(\textbf{z})+\textbf{g}_{\texttt{P}}(\textbf{z}) u_{{\texttt{P}}} +\textbf{g}_{\texttt{V}}(\textbf{z}) u_{{\texttt{V}}}\rangle \nonumber
    \\ &=\lambda_\texttt{C} (\cP (\xs + \xr) + \cI (1 - \xs - \xr) - \cP (\xs + \xr) \up + \cV \xs \uv) \nonumber
    \\& \quad + \lambda_\texttt{S} (-\beta x_\texttt{S} (1-x_\texttt{S}-x_\texttt{R}) \up -x_\texttt{S} u_\texttt{V}) + \lambda_\texttt{R} (\gamma (1-x_\texttt{S}-x_\texttt{R})\nonumber
    \\&\quad -\hat{\beta} \xr (1-x_\texttt{S}-x_\texttt{R}) \up +x_\texttt{S}u_\texttt{v}),
    \label{eq:H_full}
\end{align}
where $\langle \cdot , \cdot \rangle$ denotes the standard inner-product of two vectors in $\mathbb{R}^3$ and, $\pmb\lambda = (\lambda_\texttt{C}, \lambda_\texttt{S}, \lambda_\texttt{R}) \in \mathbb{R}^{3}$ denotes the co-state vector.\footnote{\um{The absence of the case of the abnormal multiplier being equal to zero} for the optimal control problem defined by \eqref{eq:ocp} directly follows as a consequence of \cite[Corollary 22.3]{authour34}.}
For almost every time $t \in [0, T]$, the \um{minimizing control inputs are} given by the following:
\begin{equation*}
    \textbf{u}^*(t) = \argmin_{\textbf{u}\in [u_{{\texttt{Pmin}}},1]\times [0,u_{{\texttt{Vmax}}}]} H(\textbf{z}^*(t),\textbf{u},\pmb{\lambda}^*(t)),
    \label{eq:u_min}
\end{equation*}
where the superscript $(\cdot)^*$ denotes the optimal trajectories, and the co-state dynamics are given by
\begin{equation}
  \dot{\pmb\lambda}^*(t)=\begin{pmatrix}
-\frac{\partial H(\textbf{z}^*(t),\textbf{u}^*(t),\pmb{\lambda}^*(t))}{\partial x_\texttt{C}}, -\frac{\partial H(\textbf{z}^*(t),\textbf{u}^*(t),\pmb{\lambda}^*(t))}{\partial x_\texttt{S}}, -\frac{\partial H(\textbf{z}^*(t),\textbf{u}^*(t),\pmb{\lambda}^*(t))}{\partial x_\texttt{R}}\\
\end{pmatrix},
\label{eq:lambda_dot}
\end{equation}
which satisfies the following terminal boundary condition:
\begin{equation}
   \pmb\lambda^*(T)= (1, 0, 0).
    \label{transversality}
\end{equation}
From \eqref{eq:lambda_dot}, we obtain the following co-state dynamics:
\begin{align}
    \dot{\lambda}^*_\texttt{C}(t) &=0, \nonumber
    \label{lambda=0}
   \\ \dot{\lambda}^*_{\texttt{S}}(t) &=-\lambda^*_\texttt{C}(t) (c_\texttt{P}-c_\texttt{I}-c_\texttt{P} u^*_\texttt{P}(t) +c_\texttt{V} u^*_\texttt{V}(t))-\lambda^*_\texttt{S}(t)(-\beta (1-2 x^*_\texttt{S}(t) -x^*_\texttt{R}(t)) u^*_\texttt{P}(t)-u^*_\texttt{V}(t)) \nonumber
   \\ & \quad -\lambda^*_\texttt{R}(t)(-\gamma+\hat{\beta} x^*_\texttt{R}(t) u^*_\texttt{P}(t)+u^*_\texttt{V}(t)), 
   \\ \dot{\lambda}^*_{\texttt{R}}(t) &=-\lambda^*_\texttt{C}(t) (c_\texttt{P}-c_\texttt{I}-c_\texttt{P} u^*_\texttt{P}(t))-\lambda^*_\texttt{S}(t) \beta x^*_\texttt{S}(t) u^*_\texttt{P}(t) -\lambda^*_\texttt{R}(t) (-\gamma-\hat{\beta}(1-x^*_\texttt{S}(t)-2x^*_\texttt{R}(t)) u^*_\texttt{P}(t)). \nonumber
\end{align}
Since the Hamiltonian function is affine with respect to the control inputs, the structure of the optimal control inputs will be governed by the so-called switching functions given by the following (see, e.g.,~\cite[Section~4.4.3]{authour35}):
\begin{align}
    \phi_\texttt{P}(t) &=\langle \pmb\lambda^*(t),\textbf{g}_\texttt{P}(\textbf{z}^*(t))\rangle \nonumber
    \\ &= -\lambda^*_\texttt{C}(t) \cP (\xs^*(t) + \xr^*(t))-(\lambda^*_\texttt{S}(t) \beta x^*_\texttt{S}(t) + \lambda^*_{\texttt{R}}(t) \hat{\beta} \xr^*(t))(1 - x^*_\texttt{S}(t)+x^*_\texttt{R}(t)),
    \label{eq:phi_s}
   \\\phi_\texttt{V}(t) &=\langle\pmb{\lambda}^*(t),\textbf{g}_\texttt{V}(\textbf{z}^*(t))\rangle \nonumber
   \\&= (\lambda^*_\texttt{C}(t) c_\texttt{V} -\lambda^*_\texttt{S}(t) +\lambda^*_\texttt{R}(t)) \xs^*(t).
   \label{eq:phi_v}
\end{align}
The structure of the optimal control \um{inputs} $\up^*$ \um{and} $\uv^*$ are given as follows:
\begin{equation*}
    u^*_\texttt{P}(t) = 
    \begin{cases}
      u_{\texttt{Pmin}}, \quad & \text{if} \quad  \phi_\texttt{P}(t) > 0,  \\
       1, \quad & \text{if} \quad  \phi_\texttt{P}(t) < 0, \\
       \star, \quad & \text{if} \quad \phi_\texttt{P}(t) = 0,
    \end{cases}
\end{equation*}
\begin{equation}
    u^*_\texttt{V}(t) = 
    \begin{cases}
      u_{\texttt{Vmax}}, \quad & \text{if} \quad  \phi_\texttt{V}(t) < 0,  \\
       0, \quad & \text{if} \quad  \phi_\texttt{V}(t) > 0, \\
       \star, \quad & \text{if} \quad \phi_\texttt{V}(t) = 0,
    \end{cases}
    \label{eq:uv_phi_v}
\end{equation}
where $\star$ denotes the unknown candidate optimal control input, which is also referred to as a singular control input.

When $\phi_{\texttt{Q}} \nequiv 0$ (i.e., $\phi_{\texttt{Q}}$ is not identically zero on an open time-interval of $[0, T]\subset \mathbb{R}$) for $\texttt{Q} \in \{\texttt{P}, \texttt{V}\}$, then the optimal control inputs $\uv^*$ and $\up^*$ switch between their respective minimum and maximum admissible values, depending upon the sign of ${\phi}_{\texttt{Q}}$. The control inputs with this property are called bang-bang control inputs. However, we may also encounter a situation in which $\phi_{\texttt{Q}} \equiv 0$ is accompanied by the higher derivatives of $\phi_{\texttt{Q}}$ also vanishing on an open time-interval of $[0, T]\subset \mathbb{R}$, i.e., $\dot{\phi}_{\texttt{Q}} \equiv 0,~ \ddot{\phi}_{\texttt{Q}} \equiv 0,~ \dddot{\phi}_{\texttt{Q}} \equiv 0$, and so on. The control inputs which exhibit such a phenomenon are called singular control inputs (see, e.g., \cite{authour36}).


\section{Non-existence of singular control inputs}\label{sec5}
An important mathematical tool required for the analysis of a singularity of an optimal control input is the Lie bracket. Let $\textbf{f}$ and $\textbf{g}$ be two continuously differentiable vector fields defined in $\mathbb{R}^n$. Then, for any given $\textbf{x} \in \mathbb{R}^n$, their Lie bracket is defined as follows:
\begin{equation}
    [\textbf{f}, \textbf{g}](\textbf{x})=D\textbf{g}(\textbf{x})\textbf{f}(\textbf{x})-D\textbf{f}(\textbf{x})\textbf{g}(\textbf{x}),
    \label{eq:lie_bracket}
\end{equation}
where $D\textbf{f}(\textbf{x})$ and $D\textbf{g}(\textbf{x})$ denote the Jacobian matrices of the vector fields $\textbf{f}$ and $\textbf{g}$, respectively, evaluated at the point $\textbf{x} \in \mathbb{R}^n$.

\subsection{Simultaneous non-singularity of the optimal control inputs $\up^*$ and $\uv^*$}
The existence of bang-bang control inputs (or equivalently non-existence of singular control inputs) is determined by the switching functions and their higher time-derivatives. As previously discussed, a singularity of \um{an optimal} control input arises when the switching function, $\phi_{\mathtt{Q}}$ \um{for $\texttt{Q} \in \{\texttt{P}, \texttt{V}\}$} vanishes identically over some time-interval, which is open in $[0, T]$. 

 First, we examine the possibility \um{of} both candidate \um{optimal} control inputs \um{being} simultaneously singular. It can be shown (see, e.g., \um{\cite[Section $4.4.3$]{authour35}}) that the switching functions $\phi_{\mathtt{P}}$ and $\phi_{\mathtt{V}}$ given by \eqref{eq:phi_s}
 and \eqref{eq:phi_v}, respectively, have higher order time-derivatives given by the following:
 \begin{align}
    \phi_{\mathtt{Q}} (t) &= \langle \pmb\lambda^*(t), \textbf{g}_{\mathtt{Q}}(\textbf{z}^*(t)) \rangle,
    \label{eq:phi_c_new}
    \\ \dot{\phi}_{\mathtt{Q}} (t) &= \langle \pmb\lambda^*(t), [\textbf{f}, \textbf{g}_{\mathtt{Q}}](\textbf{z}^*(t)) \rangle + \langle \pmb\lambda^*(t), [\gp, \textbf{g}_{\mathtt{Q}}](\textbf{z}^*(t)) \rangle \up^*(t)  + \langle \pmb\lambda^*(t), [\gv, \textbf{g}_{\mathtt{Q}}](\textbf{z}^*(t)) \rangle \uv^*(t),
    \label{eq:dphi_c_new}
    \\ \ddot{\phi}_{\mathtt{Q}} (t) &= \langle \pmb\lambda^*(t), [\textbf{f}, [\textbf{f} + \gp \up^* + \gv \uv^*, \textbf{g}_{\mathtt{Q}}]](\textbf{z}^*(t)) \rangle + \langle \pmb\lambda^*(t), [\gp, [\textbf{f} + \gp \up^* + \gv \uv^*, \textbf{g}_{\mathtt{Q}}]](\textbf{z}^*(t)) \rangle \up^*(t) \nonumber
    \\ & \quad+ \langle \pmb\lambda^*(t), [\gv, [\textbf{f} + \gp \up^* + \gv \uv^*, \textbf{g}_{\mathtt{Q}}]](\textbf{z}^*(t)) \rangle \uv^*(t),
    \label{eq:ddphi_c_new}
\end{align}
for $\mathtt{Q} \in \{\mathtt{P}, \mathtt{V}\}$. Before we state our result, we introduce the following assumption.
\begin{assumption}
    We proceed with the analysis on the class of diseases in which the reinfection rate exceeds the first time infection rate (i.e., $\hat{\beta} > \beta$).
    \label{assump:comp_imm}
\end{assumption}

The generalized SIRI epidemiological model discussed in this work takes reinfection into account. The finite-horizon optimal control problem presented in Eq \eqref{eq:ocp} involves several model parameters ($\beta, \hat{\beta}, \gamma$), costs ($c_{\mathtt{P}}, \cI, c_{\mathtt{V}}$), and two control inputs ($\up, \uv$) with defined lower and upper thresholds. Due to the large number of variables and parameters, it is quite challenging to derive a complete characterization for such a model. As a result, we focus on characterizing the control input behaviors for the case that the reinfection rate exceeds the initial infection rate satisfying $\hat{\beta} > \beta$. As mentioned above, during the recent COVID-19 pandemic, it was observed that the reinfection rates, particularly those associated with variants such as Delta and Omicron, were higher than the initial infection rates. The results of this paper are applicable to such immuno-compromising infectious diseases.

Now, we state the following proposition.
\begin{prop}
    \rev{Suppose Assumption \ref{assump:comp_imm} holds; then,} the candidate optimal control inputs $\uv^*$ and $\up^*$ \um{cannot} be simultaneously singular on \um{any} open time-interval $I \subset [0, T]$.
    \label{prop:simul_sing}
\end{prop}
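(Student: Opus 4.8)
The plan is to argue by contradiction: suppose there is an open interval $I \subseteq [0,T]$ on which both switching functions vanish identically, $\phi_{\mathtt{P}} \equiv 0$ and $\phi_{\mathtt{V}} \equiv 0$. Because $\dot{\lambda}^*_{\mathtt{C}} \equiv 0$ and the transversality condition \eqref{transversality} gives $\lambda^*_{\mathtt{C}}(T) = 1$, we have $\lambda^*_{\mathtt{C}} \equiv 1$ on $[0,T]$. From the explicit representation of $\xs$ in \eqref{eq:x=}, the susceptible fraction is either identically zero or strictly positive on all of $[0,T]$; the former is a degenerate case in which $\gv(\textbf{z}^*) \equiv \mathbf{0}$ by \eqref{eq:matrix2}, so vaccination does not enter the dynamics and the notion of a vaccination singular arc is vacuous. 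I would therefore treat the meaningful case $\xs(0) > 0$, so that $\xs^*(t) > 0$ for every $t$.

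With $\xs^* > 0$, expression \eqref{eq:phi_v} forces $\lambda^*_{\mathtt{S}} - \lambda^*_{\mathtt{R}} \equiv \cV$ on $I$. Differentiating this identity and inserting the co-state dynamics \eqref{eq:lambda_dot}, I expect the $\uv^*$-contributions to cancel once the relation $\lambda^*_{\mathtt{S}} - \lambda^*_{\mathtt{R}} = \cV$ is used, and (after invoking $\xs^* + \xI^* + \xr^* = 1$) the whole difference $\dot{\lambda}^*_{\mathtt{S}} - \dot{\lambda}^*_{\mathtt{R}}$ should collapse to $\up^*\,\xI^*\,(\beta\lambda^*_{\mathtt{S}} - \hat{\beta}\lambda^*_{\mathtt{R}})$. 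Since the left-hand side vanishes, $\up^* \geq u_{\mathtt{Pmin}} > 0$, and the product $\xI^*(\beta\lambda^*_{\mathtt{S}} - \hat{\beta}\lambda^*_{\mathtt{R}})$ is continuous, it must vanish identically on $I$.

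Next I would eliminate the possibility $\xI^* = 0$: writing $\phi_{\mathtt{P}} = \langle \pmb{\lambda}^*, \gp(\textbf{z}^*) \rangle = -\cP(\xs^* + \xr^*) - \xI^*(\beta\lambda^*_{\mathtt{S}}\xs^* + \hat{\beta}\lambda^*_{\mathtt{R}}\xr^*)$ via \eqref{eq:matrix2}, at any instant with $\xI^* = 0$ we have $\xs^* + \xr^* = 1$ and hence $\phi_{\mathtt{P}} = -\cP \neq 0$, contradicting $\phi_{\mathtt{P}} \equiv 0$. Thus $\xI^* > 0$ throughout $I$, and the product identity forces $\beta\lambda^*_{\mathtt{S}} \equiv \hat{\beta}\lambda^*_{\mathtt{R}}$ on $I$. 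Together with $\lambda^*_{\mathtt{S}} - \lambda^*_{\mathtt{R}} \equiv \cV$, this is a $2 \times 2$ linear system in $(\lambda^*_{\mathtt{S}}, \lambda^*_{\mathtt{R}})$ with determinant $\beta - \hat{\beta}$, which is nonzero exactly by Assumption \ref{assump:comp_imm}. Its unique solution is the constant $K := \beta\lambda^*_{\mathtt{S}} = \hat{\beta}\lambda^*_{\mathtt{R}} = \beta\hat{\beta}\cV/(\hat{\beta} - \beta) > 0$. Substituting $\beta\lambda^*_{\mathtt{S}} = \hat{\beta}\lambda^*_{\mathtt{R}} = K$ into $\phi_{\mathtt{P}}$ factorizes it as $\phi_{\mathtt{P}} = -(1 - \xI^*)(\cP + K\xI^*)$, which is strictly negative because $\xs^* > 0$ gives $\xI^* < 1$ while $\cP, K > 0$. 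This contradicts $\phi_{\mathtt{P}} \equiv 0$ and completes the argument.

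The main obstacle I anticipate is the algebraic simplification in the second step: verifying that $\dot{\lambda}^*_{\mathtt{S}} - \dot{\lambda}^*_{\mathtt{R}}$ really collapses to the single product $\up^*\xI^*(\beta\lambda^*_{\mathtt{S}} - \hat{\beta}\lambda^*_{\mathtt{R}})$, which requires careful bookkeeping of the drift and both control terms in \eqref{eq:lambda_dot} and the cancellation of the $\uv^*$-terms under the constraint $\lambda^*_{\mathtt{S}} - \lambda^*_{\mathtt{R}} = \cV$. The conceptual crux, however, is pinpointing where Assumption \ref{assump:comp_imm} is used: it is precisely the nondegeneracy $\beta \neq \hat{\beta}$ that makes the linear system solvable, forcing $\lambda^*_{\mathtt{S}}, \lambda^*_{\mathtt{R}}$ to be constants and producing the definite sign $K > 0$ needed to conclude $\phi_{\mathtt{P}} < 0$. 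When $\hat{\beta} = \beta$ and $\cV > 0$ the two relations are outright inconsistent, so the assumption also disposes of that boundary case.
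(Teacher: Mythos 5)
Your proof is correct, and it takes a genuinely different route from the paper's, although both ultimately rest on the same three conditions $\phi_{\mathtt{P}} \equiv 0$, $\phi_{\mathtt{V}} \equiv 0$, $\dot{\phi}_{\mathtt{V}} \equiv 0$ together with the normalization $\lambda^*_{\mathtt{C}} \equiv 1$. The paper never solves for the co-state: it observes that a nonzero $\pmb{\lambda}^*$ annihilating $\gv(\textbf{z}^*)$, $\gp(\textbf{z}^*)$ and $[\textbf{f}+\gp\up^*,\gv](\textbf{z}^*)$ forces these three vectors to be linearly dependent, computes the determinant $\Delta_1$ in \eqref{eq:det}, and finds that dependence would require $\xI^* = \cP(\beta-\hat{\beta})/(\cV\beta\hat{\beta})$, which is negative under Assumption \ref{assump:comp_imm} (equation \eqref{eq:simul_xi}). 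You instead resolve the co-state in coordinates: $\phi_{\mathtt{V}}\equiv 0$ with $\xs^*>0$ gives $\lambda^*_{\mathtt{S}}-\lambda^*_{\mathtt{R}}\equiv\cV$; your claimed collapse $\dot{\lambda}^*_{\mathtt{S}}-\dot{\lambda}^*_{\mathtt{R}} = \up^*\,\xI^*\,(\beta\lambda^*_{\mathtt{S}}-\hat{\beta}\lambda^*_{\mathtt{R}})$ does check out against \eqref{eq:lambda_dot} (the $\uv^*$ terms cancel exactly because $\lambda^*_{\mathtt{C}}\cV = \lambda^*_{\mathtt{S}}-\lambda^*_{\mathtt{R}}$, and the $\up^*$ terms assemble into the factor $1-\xs^*-\xr^*$), and this differentiation step is precisely the condition $\dot{\phi}_{\mathtt{V}}\equiv 0$ in disguise; solving the resulting $2\times 2$ linear system then pins down $\lambda^*_{\mathtt{S}},\lambda^*_{\mathtt{R}}$ as explicit positive constants and makes $\phi_{\mathtt{P}} = -(1-\xI^*)(\cP+K\xI^*)$ strictly negative, where $K=\beta\hat{\beta}\cV/(\hat{\beta}-\beta)$. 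The obstructions coincide: $\cP+K\xI^*=0$ would require exactly \eqref{eq:simul_xi}, so the two arguments are dual views of the same computation. What your version buys: it avoids Lie brackets entirely, it produces the explicit co-state values along any putative simultaneous singular arc, and it treats the degenerate cases more rigorously --- you rule out $\xI^*=0$ on $I$ directly from $\phi_{\mathtt{P}}\equiv 0$ (at such an instant $\phi_{\mathtt{P}}=-\cP\neq 0$), where the paper merely appeals to ``the endemic case,'' and you isolate the $\xs^*\equiv 0$ situation explicitly rather than tacitly assuming positivity. What the paper's version buys: the bracket/determinant machinery it sets up here is exactly what is reused in the proof of Theorem \ref{theorem:uv_nonsing}, so its argument generalizes to the harder single-input non-singularity result, whereas your co-state elimination does not extend as directly.
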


\begin{proof}
As discussed in Section \ref{sec:structure_OC}, a control input exhibits a singularity when the switching function associated with it and its higher derivatives are all identically zero over an open time-interval. In our setting which is comprised of two control inputs, the necessary condition for the existence of a simultaneous singularity of the inputs \um{$\uv^*$ and $\up^*$ on $I$} requires the following:
    \begin{equation}
        \phi_{\mathtt{P}}(t) = \phi_{\mathtt{V}}(t) = \dot{\phi}_{\mathtt{V}}(t) = 0, \nonumber
    \end{equation}
    \um{to hold for every/almost every $t \in I$,} which implies the following:
    \begin{align}
        \langle \pmb\lambda^*(t), \textbf{g}_{\mathtt{P}}(\textbf{z}^*(t)) \rangle &= \langle \pmb\lambda^*(t), \textbf{g}_{\mathtt{V}}(\textbf{z}^*(t)) \rangle \nonumber\\
        &= \langle \pmb\lambda^*(t), [\textbf{f}, \textbf{g}_{\mathtt{V}}](\textbf{z}^*(t)) \rangle + \langle \lambda^*(t) [\gp, \textbf{g}_{\mathtt{V}}](\textbf{z}^*(t)) \rangle \up^*(t) \nonumber\\
        &= 0,
        \label{eq:simul_phi}
    \end{align}
    where we obtain \eqref{eq:simul_phi} from \eqref{eq:phi_c_new} and \eqref{eq:dphi_c_new}\um{, with} 
    \begin{equation}
    [\textbf{f}, \gv](\textbf{z}^*(t)) = \begin{bmatrix}
    \!\!\!\!\!\!&0\\
    \!\!\!\!\!\!&0\\
    \!\!\!\!\!\!&0
\end{bmatrix},\;\;\;
[\gp, \gv](\textbf{z}^*(t)) = \begin{bmatrix}
    \!\!\!\!\!\!\!\!& -\beta \cV \xs^*(t) (1 - \xs^*(t) - \xr^*(t))\\
    \!\!\!\!\!\!\!\!& 0\\
    \!\!\!\!\!\!\!\!& - \xs^*(t) (\beta - \hat{\beta}) (1 - \xs^*(t) - \xr^*(t)) 
\end{bmatrix}.
\label{eq:dphi_matrix_new}
\end{equation}
    It follows that \eqref{eq:simul_phi} holds if either of the two conditions is true: the co-state vector identically \um{vanishes (i.e., $\pmb\lambda^*(t) \equiv \textbf{0}$}) or the vectors $\gv(\textbf{z}^*(t))$, $\gp(\textbf{z}^*(t))$ and $[\textbf{f} + \gp \up^*, \gv](\textbf{z}^*(t))$ are linearly dependent \um{over $I$}. Now, from \eqref{transversality} and \eqref{lambda=0}, we conclude that $\lambda^*_{\texttt{C}}(t) \equiv 1$ is satisfied \um{for every $t \in I$}, which implies that the co-state vector $\pmb\lambda^*(t) \nequiv \textbf{0}$. Thus, the vectors $\gv(\textbf{z}^*(t))$, $\gp(\textbf{z}^*(t))$, and $[\textbf{f} + \gp \up^*, \gv](\textbf{z}^*(t))$ must be linearly dependent over $I$ for a simultaneous singularity of the inputs to exist. By computing the determinant of the matrix formed by these three vectors, we obtain the following (we have suppressed the explicit dependency of the states and control inputs on time for the sake of brevity):
    \begin{align}
        \Delta_{1}(\textbf{z}^*) &= \begin{vmatrix}
            \cV \xs^* \;\;\;\; & -\cP (\xs^* + \xr^*) \;\;\;\; & -\beta \cV \xs^* (1 - \xs^* - \xr^*) \up^*
            \\ -\xs^* \;\;\;\; & -\beta \xs^* (1 - \xs^* - \xr^*) \;\;\;\; & 0
            \\ \xs^* \;\;\;\; & -\hat{\beta} \xr^*(1 - \xs^* - \xr^*) \;\;\;\; & (\hat{\beta} - \beta) \xs^* (1 - \xs^* - \xr^*) \up^*
        \end{vmatrix} \nonumber
        \\ &= \begin{vmatrix}
            \cV \;\;\;\; & -\cP (\xs^* + \xr^*) \;\;\;\; & -\beta \cV
            \\ -1 \;\;\;\; & -\beta \xs^* (1 - \xs^* - \xr^*) \;\;\;\; & 0
            \\ 1 \;\;\;\; & -\hat{\beta} \xr^*(1 - \xs^* - \xr^*) \;\;\;\; & \hat{\beta} - \beta
        \end{vmatrix} \xs^{*{^2}} (1 - \xs^* - \xr^*) \up^*.
        \label{eq:det}
    \end{align}
    Suppose the three vectors are linearly dependent. \um{Observe from \eqref{eq:x=} that $\xs^{*}$ is strictly positive for a given bounded time-interval. Similarly, $\xI^{*} = 1 - \xs^{*} - \xr^{*}$ is also non-zero in the endemic case. In addition, $0 < u_{\mathtt{Pmin}} \leq \up^{*} \leq 1$ implies that the input $\up^{*}$ is strictly positive. Thus, $\xs^{*{^2}} (1 - \xs^* - \xr^*) \up^*$ is clearly non-zero on $I$.} Now, setting the determinant \um{in} \eqref{eq:det} equal to $0$ and using the relation $\xs^*(t) + \xr^*(t) \neq 0$ results in the following:
    \begin{equation}
        1 - \xs^* - \xr^* = \frac{\cP (\beta - \hat{\beta})}{\cV \beta \hat{\beta}}.
        \label{eq:simul_xi}
    \end{equation}
    \um{Observe from \eqref{eq:x=} that $\xs^{*}(t)$ is an exponentially decreasing function, which remains strictly positive for a given bounded time-interval. Similarly, Lemma \ref{lem:invariant-set} ensures that $\xr^{*}(t) \geq 0$ holds. The left-hand side of \eqref{eq:simul_xi} corresponds to the state-variable $\xI^*(t)$, which resides in the set $[0, 1]$ by Lemma \ref{lem:invariant-set}. Whereas, the right-hand side is a negative constant under a compromised immunity (i.e., $\hat{\beta} > \beta$), thus implying that the equality \eqref{eq:simul_xi} can never hold.} Hence, the three vectors $\gv(\textbf{z}^*(t))$, $\gp(\textbf{z}^*(t))$, and $[\textbf{f} + \gp \up^*, \gv](\textbf{z}^*(t))$ are linearly independent. Thus, the control inputs $\uv^*(t)$ and $\up^*(t)$ can not be simultaneously singular on $I$. This concludes our proof.
\end{proof}


\subsection{Non-singularity of the optimal control input $\uv^*$}
First, we redefine \eqref{eq:ddphi_c_new} in terms of $\uv^*(t)$. By using the relation $[\textbf{f} + \gp \up^* + \gv \uv^*, \gv](\textbf{z}^*(t)) = [\gp, \gv](\textbf{z}^*(t)) \up^*(t)$ (since $[\textbf{f}, \gv](\textbf{z}^*(t)) = 0$ \um{and} $[\gv, \gv](\textbf{z}^*(t)) = 0$), we obtain the following:
\begin{align}
    \ddot{\phi}_{\mathtt{V}} (t) &= \langle \pmb\lambda^*((t), [\textbf{f}, [\gp, \gv]](\textbf{z}^*(t)) \rangle \up^*(t) + \langle \pmb\lambda^*(t), [\gp, [\gp, \gv]](\textbf{z}^*(t)) \rangle \up^{*^{2}}(t) \nonumber
    \\ &\quad + \langle \pmb\lambda^*(t), [\gv, [\gp, \gv]](\textbf{z}^*(t)) \rangle \up^*(t) \uv^*(t),
    \label{eq:ddot_phiv}
\end{align}
where
\begin{align*}
[\textbf{f}, [\gp, \gv]](\textbf{z}^*(t)) &= \begin{bmatrix}
    \!\!\!\!\!\!\!\!&\xs^*(t) (1 - \xs^*(t) - \xr^*(t)) ((\hat{\beta} - \beta) (\cI - \cP) + \beta \cV \gamma) \\
    \!\!\!\!\!\!\!\!&0\\
    \!\!\!\!\!\!\!\!&0
\end{bmatrix}, \;\;\;
    \\ [\gp, [\gp, \gv]](\textbf{z}^*(t)) &= \begin{bmatrix}
    \!\!\!\!\!\!\!\!&\xs^* (1 - \xs^*(t) - \xr^*(t)) ((\hat{\beta} - \beta) \cP + \beta^2 \cV (1 -\xr^*(t) - 2 \xs^*(t)) - \beta \hat{\beta} \cV \xr^*(t))\\
    \!\!\!\!\!\!\!\!&\beta \xs^{*^{2}}(t) (\beta - \hat{\beta}) (1 - \xs^*(t) - \xr^*(t))\\
    \!\!\!\!\!\!\!\!&\xs^*(t) (\beta- \hat{\beta}) (1 - \xs^*(t) - \xr^*(t)) ((\beta - \hat{\beta}) (1 - \xr^*(t)) + (\hat{\beta} - 2 \beta) \xs^*(t))
\end{bmatrix},
\\ [\gv, [\gp, \gv]](\textbf{z}^*(t)) &= \begin{bmatrix}
    \!\!\!\!\!\!\!\!&\beta \cV \xs^*(t) (1 - \xs^*(t) - \xr^*(t))\\
    \!\!\!\!\!\!\!\!&0\\
    \!\!\!\!\!\!\!\!&\xs^*(t) (\beta - \hat{\beta}) (1 - \xs^*(t) - \xr^*(t)) 
\end{bmatrix} 
\\&= -[\gp, \gv](\textbf{z}^*(t)).
\end{align*}
Before we state our main result, we state the following assumptions that will be essential for what follows. \rev{It is important to note that the weights $c_{\mathtt{P}}, c_{\mathtt{V}}$, and thresholds $u_{\mathtt{Pmin}}, u_{\mathtt{Vmax}}$ are parameters that policy makers are free to decide at the onset of a pandemic. We enforce the effective cost of protection $\cP (1 - u_{\mathtt{Pmin}})$ to be lower than the infection cost $\cI$ to incentivize the protection adoption. In addition, recall that we restrict our analysis to the class of immunocompromised diseases such that $\hat{\beta} > \beta$ holds. These assumptions motivate the following mathematical conditions in the form of Assumptions \ref{assump:beta_c_i}. A further discussion on the choice of parameters is included in Section \ref{sec:numerical}.}

\begin{assumption}
    We assume that the weighing and model parameters satisfy the following inequalities:
    \begin{enumerate}[(i)]
        \item $(\beta - \hat{\beta}) (\cI - \cP) \neq -\beta \cV \gamma$;
        \item $(\beta - \hat{\beta}) \cI + \beta \hat{\beta} \cV - \beta \cV \gamma < 0$; 
        \item $(\beta - \hat{\beta}) (\cI - \cP (1 - u_{\mathtt{{Pmin}}})) + \beta \hat{\beta} \cV u_{\mathtt{{Pmin}}} < 0$.
    \end{enumerate}
    \label{assump:beta_c_i}
\end{assumption}
Assumptions \ref{assump:comp_imm} and \ref{assump:beta_c_i} are sufficient conditions under which Theorem \ref{theorem:uv_nonsing} holds.
\begin{theorem}
    Suppose Assumptions \ref{assump:comp_imm} and \ref{assump:beta_c_i} hold; then, the candidate optimal control input $\uv^*$ cannot be singular on \um{any} open time-interval \um{$I \subset [0, T]$}.
    \label{theorem:uv_nonsing}
\end{theorem}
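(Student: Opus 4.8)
The plan is to argue by contradiction. Suppose $\uv^*$ admits a singular arc on an open interval $I\subset[0,T]$, so that $\phi_{\mathtt{V}}\equiv\dot\phi_{\mathtt{V}}\equiv\ddot\phi_{\mathtt{V}}\equiv 0$ on $I$; I will extract from these three identities enough rigidity on the co-state and the state to violate the bounds of Lemma~\ref{lem:invariant-set}. First I would use the two lowest-order conditions to pin down the co-state. Since $\dot\lambda^*_{\mathtt{C}}\equiv 0$ with $\lambda^*_{\mathtt{C}}(T)=1$ by \eqref{transversality}, we have $\lambda^*_{\mathtt{C}}\equiv 1$; combined with $\xs^*>0$ (which holds on any bounded interval by \eqref{eq:x=}), the identity $\phi_{\mathtt{V}}\equiv 0$ in \eqref{eq:phi_v} forces $\lambda^*_{\mathtt{S}}-\lambda^*_{\mathtt{R}}\equiv \cV$ on $I$. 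Next, using $[\textbf{f},\gv]=\textbf{0}$ and $[\gv,\gv]=\textbf{0}$, the first derivative collapses to $\dot\phi_{\mathtt{V}}=\langle\pmb\lambda^*,[\gp,\gv]\rangle\,\up^*$; since $\up^*\geq u_{\mathtt{Pmin}}>0$ and $\xs^*\,\xI^*\neq 0$ in the endemic regime, the expression for $[\gp,\gv]$ in \eqref{eq:dphi_matrix_new} forces $\beta\cV+(\beta-\hat\beta)\lambda^*_{\mathtt{R}}\equiv 0$, i.e. $\lambda^*_{\mathtt{R}}\equiv \beta\cV/(\hat\beta-\beta)$, a constant (here Assumption~\ref{assump:comp_imm} guarantees $\hat\beta-\beta\neq 0$). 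Hence $\lambda^*_{\mathtt{S}}$ is constant as well.

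Next I would reduce the protection input to a constant. By Proposition~\ref{prop:simul_sing}, $\up^*$ cannot also be singular on $I$, so $\phi_{\mathtt{P}}\not\equiv 0$; as $\phi_{\mathtt{P}}$ is continuous, it keeps a constant sign on some subinterval $I'\subseteq I$, and therefore $\up^*\equiv\bar u_{\mathtt{P}}$ with $\bar u_{\mathtt{P}}\in\{u_{\mathtt{Pmin}},1\}$ on $I'$. On $I'$ I would then impose the second-order condition \eqref{eq:ddot_phiv}. Because $[\gv,[\gp,\gv]]=-[\gp,\gv]$ and $\langle\pmb\lambda^*,[\gp,\gv]\rangle=\dot\phi_{\mathtt{V}}/\up^*=0$ on $I$, the term carrying $\uv^*$ drops out, leaving $\langle\pmb\lambda^*,[\textbf{f},[\gp,\gv]]\rangle+\bar u_{\mathtt{P}}\langle\pmb\lambda^*,[\gp,[\gp,\gv]]\rangle=0$. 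Substituting $\lambda^*_{\mathtt{C}}=1$ together with the constant values of $\lambda^*_{\mathtt{S}},\lambda^*_{\mathtt{R}}$ into the two nested brackets and dividing out the common positive factor $\xs^*\,\xI^*$, I expect the separate dependence on $\xs^*$ and $\xr^*$ to cancel, leaving a single affine equation in $\xI^*=1-\xs^*-\xr^*$ alone; this pins $\xI^*$ to one fixed constant determined by $\bar u_{\mathtt{P}}$ and the model and cost parameters.

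The final step is the sign test. Solving that affine equation for $\xI^*$ in each case, I anticipate that positivity of the resulting constant is exactly the negation of Assumption~\ref{assump:beta_c_i}(ii) when $\bar u_{\mathtt{P}}=1$, and the negation of Assumption~\ref{assump:beta_c_i}(iii) when $\bar u_{\mathtt{P}}=u_{\mathtt{Pmin}}$. Thus under these assumptions the forced value satisfies $\xI^*<0$ in both cases, contradicting $\xI^*\in[0,1]$ from Lemma~\ref{lem:invariant-set}. Since both admissible constant values of $\up^*$ are ruled out, no singular subinterval $I'$, and hence no singular arc $I$, can exist, which is the assertion; Assumption~\ref{assump:beta_c_i}(i) enters as the accompanying non-degeneracy condition that keeps the drift part $\langle\pmb\lambda^*,[\textbf{f},[\gp,\gv]]\rangle$ of \eqref{eq:ddot_phiv} from degenerating.

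The hard part will be the Lie-bracket bookkeeping: computing $[\textbf{f},[\gp,\gv]]$ and $[\gp,[\gp,\gv]]$ and, above all, verifying that after substituting the forced constant co-states the second-order relation really collapses to an equation in $\xI^*$ only, with all separate $\xs^*$- and $\xr^*$-dependence cancelling. This cancellation is what makes the comparison against Assumptions~\ref{assump:beta_c_i}(ii)--(iii) clean and decisive. A secondary point to dispatch at the outset is the endemic versus disease-free dichotomy: dividing by $\xI^*$ in the first step is legitimate only when the infection is present, so the degenerate case $\xI^*\equiv 0$ (where the vaccination switching function degenerates and singularity is vacuous) should be separated off. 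As a backup to the sign test I would keep in reserve the observation that a constant $\xI^*$ together with $\dot\xI^*=0$ forces $\xs^*$ and $\xr^*$ to be individually constant, whence $\dot\xs^*=-\xs^*(\beta\xI^*\bar u_{\mathtt{P}}+\uv^*)=0$ with $\xs^*>0$ again contradicts $\xI^*>0$.
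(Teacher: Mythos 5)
Your proposal is correct and follows the same overall strategy as the paper: contradiction from $\phi_{\mathtt{V}}\equiv\dot\phi_{\mathtt{V}}\equiv\ddot\phi_{\mathtt{V}}\equiv 0$, Proposition \ref{prop:simul_sing} to force $\up^*$ to a bang value, and a sign contradiction that pins $\xI^*$ to a negative constant, violating Lemma \ref{lem:invariant-set} under Assumptions \ref{assump:beta_c_i}(ii)--(iii). Where you genuinely diverge is the middle step. The paper expands $[\gp,[\gp,\gv]]$ in the basis $\{\gv,[\gp,\gv],[\textbf{f},[\gp,\gv]]\}$ (its equation \eqref{eq:ep_mu_kap}), kills the first two components using $\phi_{\mathtt{V}}=\dot\phi_{\mathtt{V}}=0$, and concludes $\up^*=-1/\kappa$. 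You instead use those same two conditions to solve for the co-state explicitly: $\lambda^*_{\mathtt{S}}-\lambda^*_{\mathtt{R}}\equiv\cV$ and $\lambda^*_{\mathtt{R}}\equiv\beta\cV/(\hat\beta-\beta)$, both constants, then substitute into $\ddot\phi_{\mathtt{V}}=0$. The cancellation you anticipated does occur: after substitution, the coefficient of $\bar u_{\mathtt{P}}$ collapses to $\xs^*\xI^*\bigl[(\hat\beta-\beta)\cP+\beta\hat\beta\cV(2\xI^*-1)\bigr]$, which is exactly $\xs^*\xI^*$ times the numerator of the paper's $\kappa$, so your two case equations coincide with the paper's \eqref{eq:up_contradict1} and \eqref{eq:up_contradict2} (up to an overall factor $\beta\hat\beta\cV\bar u_{\mathtt{P}}$, and a harmless $-\beta\cV\gamma$ term in the $u_{\mathtt{Pmin}}$ case that only strengthens the contradiction). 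Your route buys two things: first, since you never divide by $(\hat\beta-\beta)(\cI-\cP)+\beta\cV\gamma$, Assumption \ref{assump:beta_c_i}(i) is in fact never needed --- so your closing remark about (i) guarding against degeneracy of $\langle\pmb\lambda^*,[\textbf{f},[\gp,\gv]]\rangle$ is the one inaccuracy in the proposal (the paper needs (i) only so that $\kappa$ is well-defined; your version dispenses with it entirely). Second, your restriction to a subinterval $I'$ on which $\phi_{\mathtt{P}}$ keeps a constant sign is a more careful use of Proposition \ref{prop:simul_sing} than the paper's blanket assertion that $\up^*$ is bang-bang on all of $I$ (note Proposition \ref{prop:simul_sing} must be applied to $I'$ itself to rule out $\phi_{\mathtt{P}}$ vanishing identically on a proper subinterval, which your argument accommodates). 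Both proofs share the caveat you flag explicitly and the paper handles only in passing: the divisions by $\xI^*$ presuppose the endemic regime $\xI^*\neq 0$ on $I$.
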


\begin{proof}
    The proof is by contradiction. Assume that the control input \um{$\uv^*$} is singular \um{on $I$}. A singularity in \um{$\uv^*$} is obtained when the the switching function \um{$\phi_{\mathtt{V}}$} vanishes over $I$, which, in turn, implies $\phi_{\mathtt{V}}(t) = \dot{\phi}_{\mathtt{V}}(t) = \ddot{\phi}_{\mathtt{V}}(t) = 0$ for \um{every}/almost every $t \in I$. Equating $\dot{\phi}_{\mathtt{V}}(t)$ in \eqref{eq:simul_phi} to zero and leveraging the fact that $[\textbf{f}, \gv](\textbf{z}^*(t)) = \textbf{0}$\um{, implies} the following:
    \begin{equation}
        \langle \pmb\lambda^*(t), [\gp, \gv](\textbf{z}^*(t)) \rangle = 0.
        \label{eq:lamb_gpgv=0}
    \end{equation}
    As a result of \eqref{eq:lamb_gpgv=0}, the second time-derivative of the switching function in \eqref{eq:ddot_phiv} is given by the following: $$\ddot{\phi}_{\mathtt{V}}(t) = \langle \pmb\lambda^*(t), [\textbf{f}, [\gp, \gv]](\textbf{z}^*(t)) \rangle \up^*(t) + \langle \pmb\lambda^*(t), [\gp, [\gp, \gv]](\textbf{z}^*(t)) \rangle \up^{*^{2}}(t).$$ As the control input $\up^*(t) \neq 0$ (since $\up^*(t)$ lies in the interval $\up^*(t) \in [u_{\texttt{{Pmin}}}, 1]$, with $u_{\texttt{{Pmin}}} > 0$), on equating $\ddot{\phi}_{\mathtt{V}}(t) = 0$, we obtain the following:
    \begin{equation}
        \up^*(t) = - \frac{\langle \pmb\lambda^*(t), [\textbf{f}, [\gp, \gv]](\textbf{z}^*(t)) \rangle}{\langle \pmb\lambda^*(t), [\gp, [\gp, \gv]](\textbf{z}^*(t)) \rangle}.
        \label{eq:up_star}
    \end{equation}
    Now, we express the vector $[\gp, [\gp, \gv]](\textbf{z}^*(t))$ \um{as} follows:
    \begin{equation}
        [\gp, [\gp, \gv]](\textbf{z}^*(t)) = \epsilon(\textbf{z}^*(t)) \gv(\textbf{z}^*(t)) + \mu(\textbf{z}^*(t)) [\gp, \gv](\textbf{z}^*(t)) + \kappa(\textbf{z}^*(t)) [\textbf{f}, [\gp, \gv]](\textbf{z}^*(t)),
        \label{eq:ep_mu_kap}
    \end{equation}
    where $\epsilon$, $\mu$, $\kappa: \mathbb{R}^3 \rightarrow \mathbb{R}$ are given by (dependency on time has been dropped for clarity)
    \begin{align*}
        \epsilon &= \beta \xs^* (\hat{\beta} - \beta) (1 - \xs^* - \xr^*),
        \\ \mu &= (\hat{\beta} - \beta) (1 - \xs^* - \xr^*),
        \\ \kappa &= \frac{(\hat{\beta} - \beta) \cP + \beta \hat{\beta} \cV (1 - 2 \xs^* - 2 \xr^*)}{(\hat{\beta} - \beta) (\cI - \cP) + \beta \cV \gamma}.
    \end{align*}

\noindent It can be shown that the above obtained functions $\epsilon$, $\mu$, and $\kappa$ are indeed unique, since the three vectors \um{$\gv(\textbf{z}^*(t))$, $[\gp, \gv](\textbf{z}^*(t))$, and $[\textbf{f}, [\gp, \gv]](\textbf{z}^*(t))$ form a basis of $\mathbb{R}^{3}$ for each $t \in I$}. \rev{Note that under Assumption \ref{assump:beta_c_i}(i), when $(\beta - \hat{\beta}) (\cI - \cP) \neq -\beta \cV \gamma$ holds, $\kappa$ is well-defined.}

Rewriting the denominator of $\up^*(t)$ in \eqref{eq:up_star} (i.e., $\langle \pmb\lambda^*(t), [\gp, [\gp, \gv]](\textbf{z}^*(t)) \rangle$) in terms of the right-hand side of \eqref{eq:ep_mu_kap}, we obtain the following:
    \begin{align}
        \langle \pmb\lambda^*(t), [\gp, [\gp, \gv]](\textbf{z}^*(t)) \rangle &= \epsilon(\textbf{z}^*(t)) \underbrace{\langle \pmb\lambda^*(t), \gv(\textbf{z}^*(t)) \rangle}_\text{= 0 as $\phi_{\mathtt{V}}(t) = 0$} + \mu(\textbf{z}^*) \underbrace{\langle \pmb\lambda^*(t), [\gp, \gv](\textbf{z}^*(t)) \rangle}_\text{= 0 as $\dot{\phi}_{\mathtt{V}}(t) = 0$} \nonumber
        \\ & \quad+ \kappa(\textbf{z}^*(t)) \langle \pmb\lambda^*(t), [\textbf{f}, [\gp, \gv]](\textbf{z}^*(t)) \rangle.
        \label{eq:lamb_gpgpgv}
    \end{align}
    
\noindent Substituting \eqref{eq:lamb_gpgpgv} in \eqref{eq:up_star} under the \um{conditions} $\phi_{\mathtt{V}}(t) = \dot{\phi}_{\mathtt{V}}(t) = \ddot{\phi}_{\mathtt{V}}(t) = 0$ results in $\up^*(t) = -\frac{1}{\kappa(\textbf{z}^*(t))}$. Since, we assumed $\uv^*$ to be singular \um{on $I$}, as a consequence of Proposition \ref{prop:simul_sing}, it follows that $\up^*$ must exhibit a non-singularity \um{on} $I$. In other words, for a singularity of the optimal control input $\uv^*$ to exist\um{,} it is necessary that $\up^*$ is a bang-bang control (i.e., either $\up^*(t) = 1$, or $\up^*(t) = u_{\texttt{{Pmin}}}$ for \um{every}/almost every $t \in I$). \rev{When the control input $\up^*(t) = -\frac{1}{\kappa(\textbf{z}^*(t))} = 1$, the following holds:
     \vspace{-0.3cm}
    \begin{align}
         &\frac{(\beta - \hat{\beta}) \cI - \beta \cV \gamma}{\beta \hat{\beta} \cV} = 1 - 2 \xs^*(t) - 2 \xr^*(t) \nonumber
        \\ \implies &(\beta - \hat{\beta}) \cI - \beta \cV \gamma = \beta \hat{\beta} \cV \big(2 \xI^*(t) - 1 \big) \nonumber
        \\\implies &(\beta - \hat{\beta}) \cI - \beta \cV \gamma + \beta \hat{\beta} \cV = 2 \xI^*(t).
        \label{eq:up_contradict1}
    \end{align}
    Note that the right-hand side of \eqref{eq:up_contradict1} lies in the range of $[0, 2]$, whereas the left-hand side is strictly negative by Assumption \ref{assump:beta_c_i}(ii). Thus, \eqref{eq:up_contradict1} can not be true. Now, we consider the case when $\up^*(t) = -\frac{1}{\kappa(\textbf{z}^*(t))} = u_{\mathtt{{Pmin}}}$, where the following holds:
    \begin{align}
        &\frac{(\beta - \hat{\beta}) (\cI - \cP (1 - u_{\texttt{{Pmin}}}))}{\beta \hat{\beta} \cV u_{\mathtt{{Pmin}}}} = 1 - 2 \xs^*(t) - 2 \xr^*(t) \nonumber
       \\ \implies &(\beta - \hat{\beta}) (\cI - \cP (1 - u_{\mathtt{{Pmin}}})) = \beta \hat{\beta} \cV u_{\mathtt{{Pmin}}} \big(2 \xI^*(t) - 1 \big) \nonumber
       \\ \implies &(\beta - \hat{\beta}) (\cI - \cP (1 - u_{\mathtt{{Pmin}}})) + \beta \hat{\beta} \cV u_{\mathtt{{Pmin}}} = 2 \xI^*(t).
         \label{eq:up_contradict2}
    \end{align}
    Again, the right-hand side of \eqref{eq:up_contradict2} lies in the range of $[0, 2]$, whereas the left-hand side is strictly negative by Assumption \ref{assump:beta_c_i}(iii).} Thus, the structure of the \um{singular} control input $\up^*(t) = - \frac{\langle \pmb\lambda^*(t), [\textbf{f}, [\gp, \gv]](\textbf{z}^*(t)) \rangle}{\langle \pmb\lambda^*(t), [\gp, [\gp, \gv]](\textbf{z}^*(t)) \rangle}$ does not hold, which implies that $\ddot{\phi}_{\mathtt{V}}(t) = 0$ is not possible on $I$. By integrating $\ddot{\phi}_{\mathtt{V}}(t)$ twice, it is deduced that ${\phi}_{\mathtt{V}}(t) = 0$ on $I$ is also not possible. Thus, under Assumptions \ref{assump:comp_imm} and \ref{assump:beta_c_i}, the \um{optimal} control input $\uv^*$ is non-singular. This concludes our proof on the non-singular behavior of $\uv^*$.
\end{proof}
Note that the above result which characterizes the behavior of $\uv^*$ as a non-singular input is based on Assumptions \ref{assump:comp_imm} and \ref{assump:beta_c_i}. The impact of relaxing these assumptions presents an interesting research avenue, which we plan to explore in the future.
\subsection{Practical implication of theory}
Our theory demonstrated that there is no simultaneous singularity, nor is there any possibility of the vaccination input exhibiting a  singularity under the stated Assumptions \ref{assump:comp_imm} and \ref{assump:beta_c_i}. This has important implications on the public health policy. Specifically, the non-singularity results guarantee that the optimal vaccination policy is the simplest possible bang-bang control, which is often considered a more practical and appropriate intervention in epidemiological settings (see e.g., \cite{authour28, authour29}). The mathematical proof of Theorem \ref{theorem:uv_nonsing} rules out the existence of singularities in the vaccination input, which align with this broader understanding.

Further work needs to be done to fully characterize the optimal bang-bang control policy (i.e., determination of treatment level and transition times between treatment and no-treatment). Implementing our proposed strategies in real-world scenarios may be challenging, as implementation often requires adherence by individuals to the prescribed policies, and it is often difficult to ensure full compliance from people. It is to be noted that the optimality of a non-singular control has only been proven for the class of diseases leading to a compromised immunity (i.e., the reinfection rate is higher than the initial infection rate). As part of future work, we plan to expand our analysis to include diseases that confer a partial immunity, for which $\hat{\beta}$ is less than $\beta$. 

\section{Numerical results}\label{sec:numerical}

Now, we illustrate the trajectories of the optimal control inputs $\uv^*$ and $\up^*$, and the evolution of the SIRI dynamics through numerical simulations. \rev{We demonstrate different phenomena, through three different cases obtained by changing the parameters and costs. In the first two cases, we select the parameters and costs such that Assumptions \ref{assump:comp_imm} and \ref{assump:beta_c_i} are satisfied. In the third case, we violate the assumptions and illustrate the presence of singularities.} We choose \rev{$u_{\texttt{Pmin}} = 0.2$} and $u_{\texttt{Vmax}} = 0.9$. In addition, for the endemic equilibrium to exist, \um{the chosen parameters also satisfy the inequality $\gamma < \hat{\beta} u_{\texttt{Pmin}}$}. \rev{Accordingly, for the first two cases, we choose the following weighing and model parameters which satisfy the above mentioned assumptions, whereas for the third case, we choose $\hat{\beta} < \beta$,} with $\xs(0) = 0.8, \; \xI(0) = 0.2, \textnormal{ and } \xr(0) = 0$, where $x_{\texttt{j}}(0)$ for $\texttt{j} \in \{\texttt{S}, \texttt{I}, \texttt{R}\}$ denotes the initial state for the susceptible, infected, and recovered fractions of the population, respectively. The different costs and disease parameters are included in Table \ref{table1}. Note that a different set of parameters and costs will not violate the theoretical results proposed in Proposition~\ref{prop:simul_sing} and Theorem \ref{theorem:uv_nonsing}, as long as Assumptions \ref{assump:comp_imm} and \ref{assump:beta_c_i} hold. Thus, the main results remain robust to the choice of parameters, which are also illustrated in the numerical simulations. The choice of the reinfection and recovery rates are governed by the basic reproduction number, the commonly used as a metric in epidemiological studies, to determine the strength of an infectious disease. The disease spreads as long as the basic reproduction ratio is greater than one. Since we focus on the case of a compromised immunity, we choose an infection rate $\beta < \hat{\beta}$. Several studies emphasized the variability in the reproduction numbers of different COVID-19 viral strains. For example, the authors of \cite{authour37} predicted the reproduction number to be around $2.2$ in the early phase of COVID-19 spread, whereas the authors in \cite{authour38} estimated the Omicron reproduction number in certain parts of the world to be around $8$. Such a large variation is potentially due to the behavioral and environmental circumstances alongside viral mutations. We have chosen our infection and recovery rates which varies within a range of $R_{0} \in [1.32, 20]$. In the first case, we set the costs such that the protection cost, $c_{\mathtt{P}}$, dominates the other two costs, even though the effective cost of protection $\cP (1 - u_{\texttt{Pmin}})$ is lower than the infection cost. In the second and third cases, the protection cost is the lowest. The infection cost $c_{\mathtt{I}}$ being highest ensures that individuals are incentivized to choose either protection or vaccination, thus reducing the infection spread. The initial conditions of the model reflect that, a large proportion of the population is susceptible to the disease at the start of the epidemic, while only a small fraction is initially infected. We set the initial conditions such that there are no recovered individuals at the beginning of the epidemic, which corresponds to a first wave epidemic. 
\begin{table}[H]
\caption{Costs and disease parameters under various cases}
\label{table1}
\begin{center}
\begin{tabular}{lllllll} 
 \hline
  & $\cP$ & $\cV$ & $\cI$ & $\beta$ & $\hat{\beta}$ & $\gamma$ \\ [0.3ex] 
 \hline
 Case 1 & 7.1 & 2 & 7 & 1 & 2.5 & 0.38 \\ 
 Case 2 & 0.3 & 2 & 5 & 1 & 2 & 0.1 \\ 
 Case 3 & 0.3 & 3 & 5 & 3 & 2 & 0.1 \\ \hline
\end{tabular}
\end{center}
\end{table}

Certain numerical methods exist to analyze the existence of singularities. Most of the existing methods are efficient on linear systems. Our system under study is non-linear, and such numerical methods require linearization of the system around the operating point. We numerically validate our analytical results using the numerical solver Quasi-Interpolation based Trajectory Optimization (QuITO), which is famous for solving constrained nonlinear optimal control problems (see \cite{authour39}). QuITO uses a direct multiple shooting (DMS) technique to discretize the control trajectory into several segments, and then obtains the optimal solution by solving for the control inputs at the boundaries of these segments. The trajectories which correspond to the states and control inputs for the three cases of weighing and model parameters are illustrated in Figure \ref{fig:only_case}.

The plots in the top row represent the control inputs, whereas the bottom panel illustrates the corresponding state trajectories. In the first two cases, when all assumptions are satisfied, we observe that the simultaneous singularity of control inputs, as well as 
the singularity in $\uv^*$, are completely absent, thus validating Proposition \ref{prop:simul_sing} and Theorem \ref{theorem:uv_nonsing}. 

In the first case (i.e., Figure \ref{fig:a}), when the protection cost is high, and when the infection prevalence becomes very low, we observe that the complete removal of protection is the optimal policy. This is illustrated by the switching in $\up^*$ from $\up^* = u_{\mathtt{Pmin}}$ to $\up^* = 1$ after 42 days. In the second case, when the protection cost is the cheapest, we observe a complete adoption of protection throughout the time-horizon, irrespective of the infection level. In the first two cases, an interesting observation is the behavior of the trajectory of the optimal control input $\uv^*$. At first, the behavior seems counter-intuitive; even though a sufficient fraction of the population is susceptible, vaccination is not applied as an input. This is explained based on the infection and reinfection rates $\beta$ and $\hat{\beta}$, respectively. Recall that the parameters are such that the rate at which the susceptible agents get infected (i.e., $\beta$) is lower than the rate at which the recovered agents get reinfected (i.e., $\hat{\beta}$). The susceptible agents have two options available to them: either (incurring a cost $\cV$) they transit to the recovered state ($\mathtt{R}$) by getting vaccinated where they are likely to get infected at a (comparatively higher) rate $\hat{\beta}$ or they remain in the susceptible state ($\mathtt{S}$) and get infected at a (comparatively lower) rate of $\beta$. Quite intuitively, the latter option seems optimal for the susceptible agents. In other words, the choice of applying vaccinations (and thereby incurring a vaccination weighing parameter), then transiting to the recovered state, and finally getting reinfected at a higher rate $\hat{\beta}$ is not optimal (note that the infection weighing parameter $\cI = 5$ dominates in the current scenario). Instead, agents prefer to remain in the susceptible state by not getting vaccinated. Therefore, we observe the optimal vaccination control $\uv^* \equiv 0$ being satisfied throughout the given time duration. This leads to the important observation that when the reinfection rate is higher than the initial infection rate in a SIRI model with a high infection cost, the susceptible individuals prefer to remain unvaccinated and get infected at a lower rate. 

Simulations in Figure \ref{fig:a} and \ref{fig:b} confirm the absence of simultaneous singularities for both the inputs $u_{\mathtt{P}}^*$ and $u_{\mathtt{V}}^*$, and the non- singularity of $u_{\mathtt{V}}^*$. These findings confirm the theoretical results outlined in the paper. It is important to note that a constant input represents a special case of non-singular control. Our analytical results focus on the special case of compromised immunity, where the reinfection rate ($\hat{\beta}$) exceeds the initial infection rate ($\beta$). As previously discussed, due to the high reinfection rate, individuals find it optimal not to get vaccinated, thus resulting in $u_{\mathtt{V}}^* = 0$, which is the lowest possible limit.
\begin{figure}[H]
     \centering
     \begin{subfigure}[b]{0.32\textwidth}
         \centering
         \includegraphics[width=\textwidth, trim={2cm 8cm 3cm 6cm},clip]{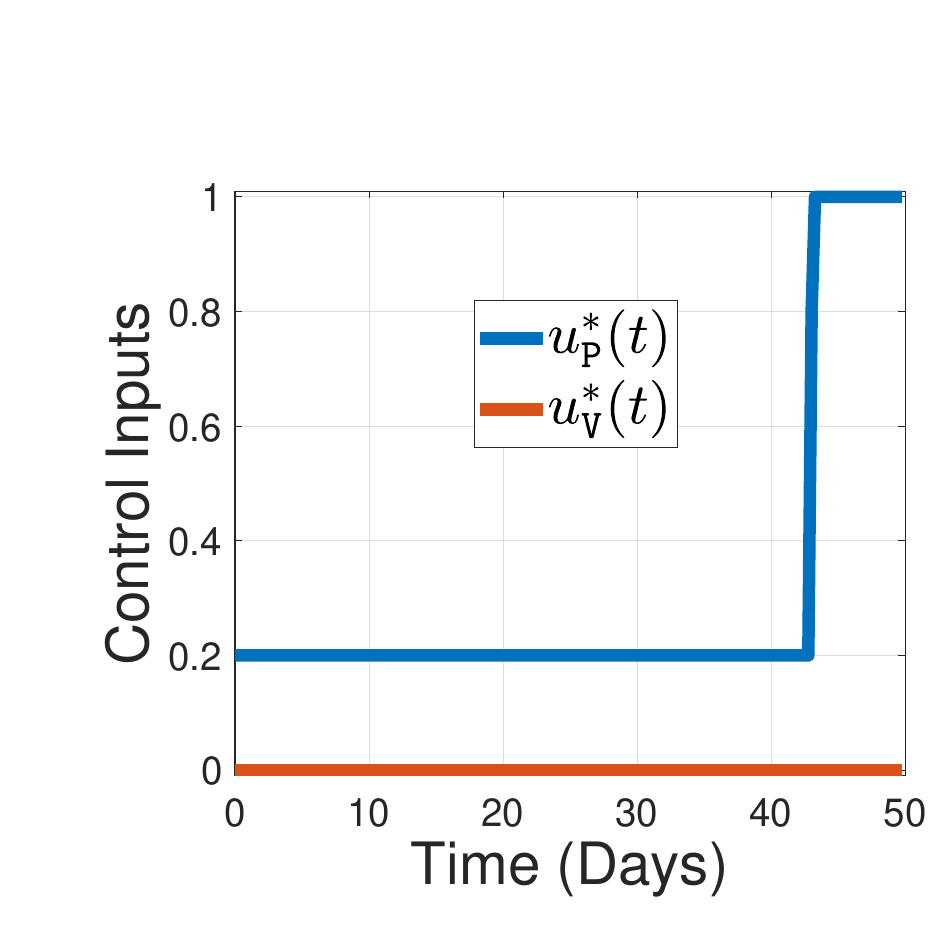}
     \end{subfigure}
     \hfill
     \begin{subfigure}[b]{0.32\textwidth}
         \centering
         \includegraphics[width=\textwidth, trim={2cm 8cm 3cm 6cm},clip]{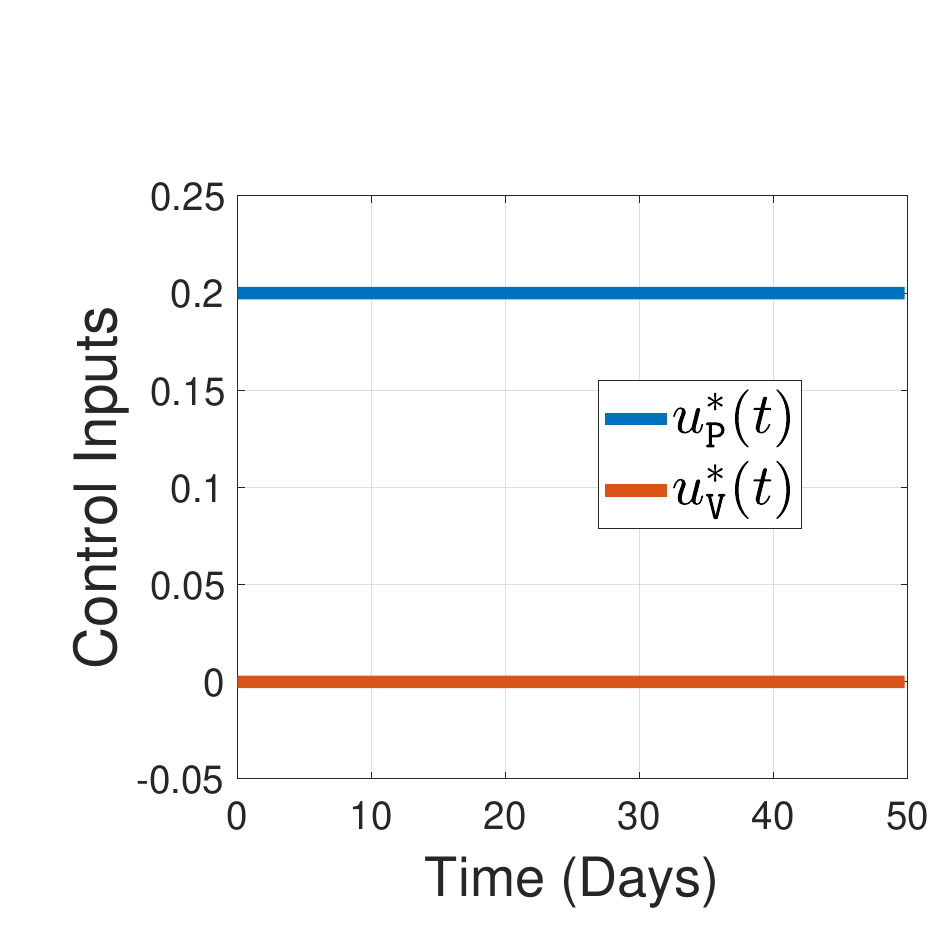}
     \end{subfigure}
     \hfill
     \begin{subfigure}[b]{0.32\textwidth}
         \centering
         \includegraphics[width=\textwidth, trim={2cm 8cm 3cm 6cm},clip]{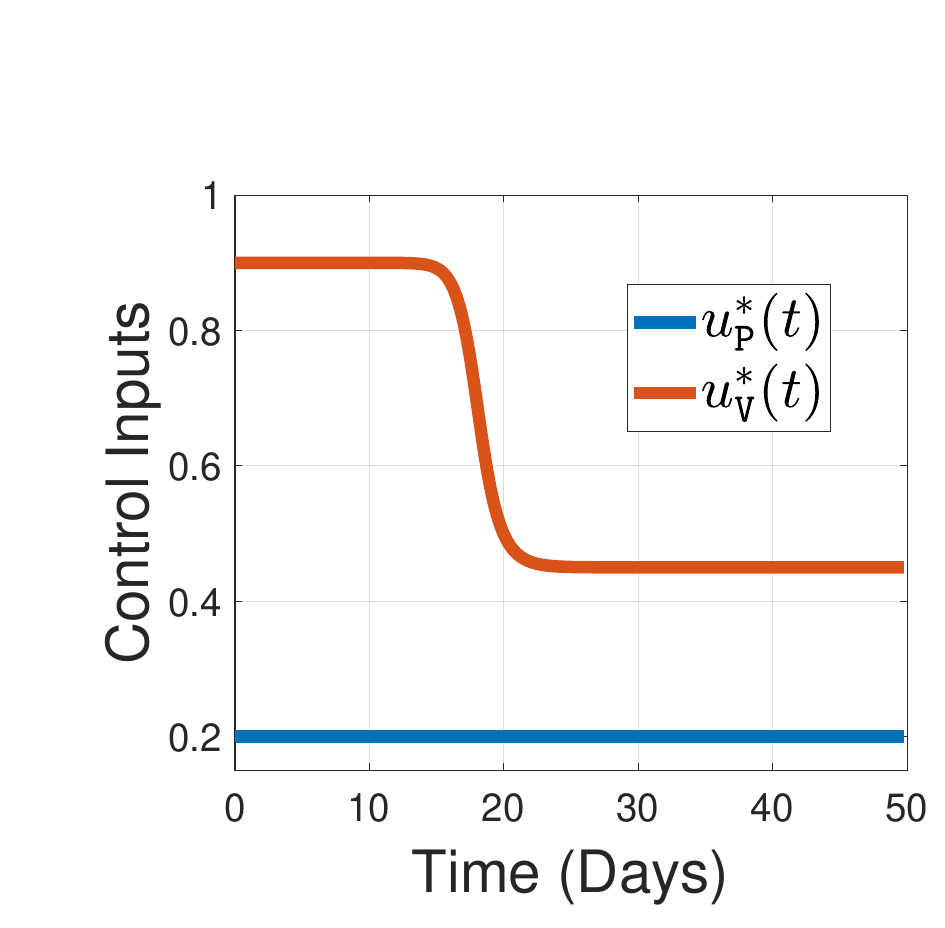}
     \end{subfigure}
     \begin{subfigure}[b]{0.32\textwidth}
         \centering
         \includegraphics[width=\textwidth, trim={2cm 8cm 3cm 9cm},clip]{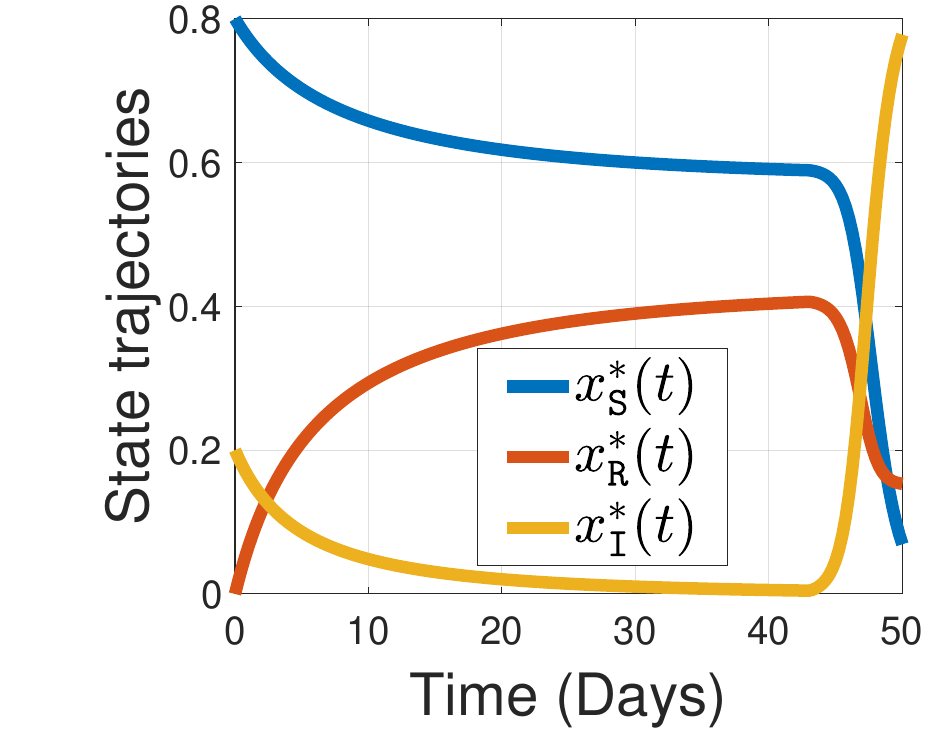}
         \caption{Switching $\up^*$}
         \label{fig:a}
     \end{subfigure}
     \hfill
     \begin{subfigure}[b]{0.32\textwidth}
         \centering
         \includegraphics[width=\textwidth, trim={2cm 8cm 3cm 9cm},clip]{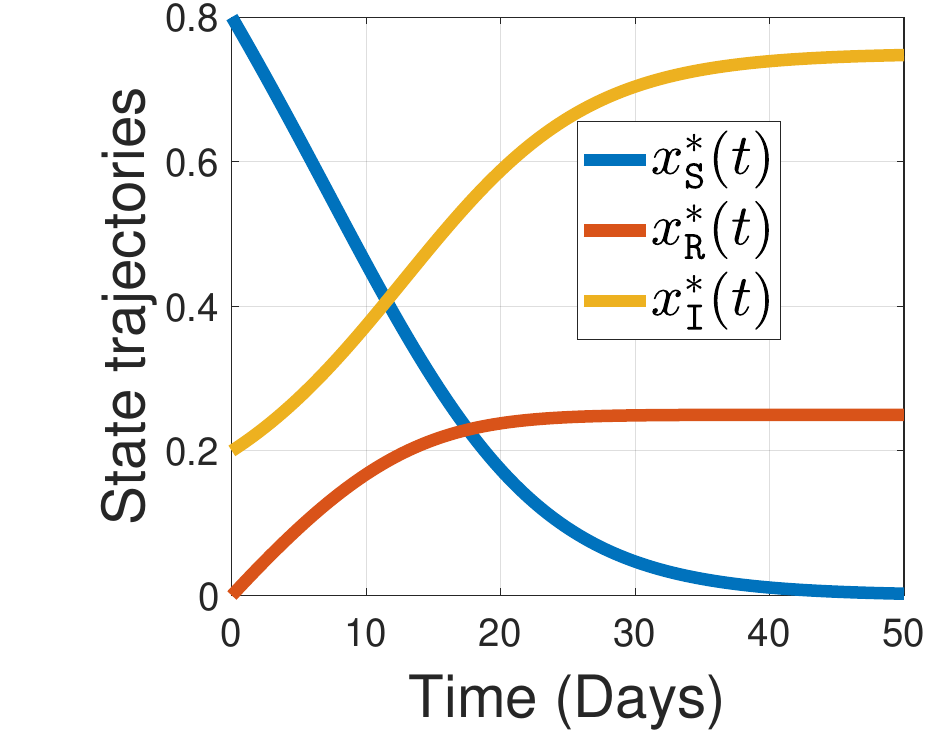}
         \caption{Constant inputs}
         \label{fig:b}
     \end{subfigure}
     \hfill
     \begin{subfigure}[b]{0.32\textwidth}
         \centering
         \includegraphics[width=\textwidth, trim={2cm 8cm 3cm 9cm},clip]{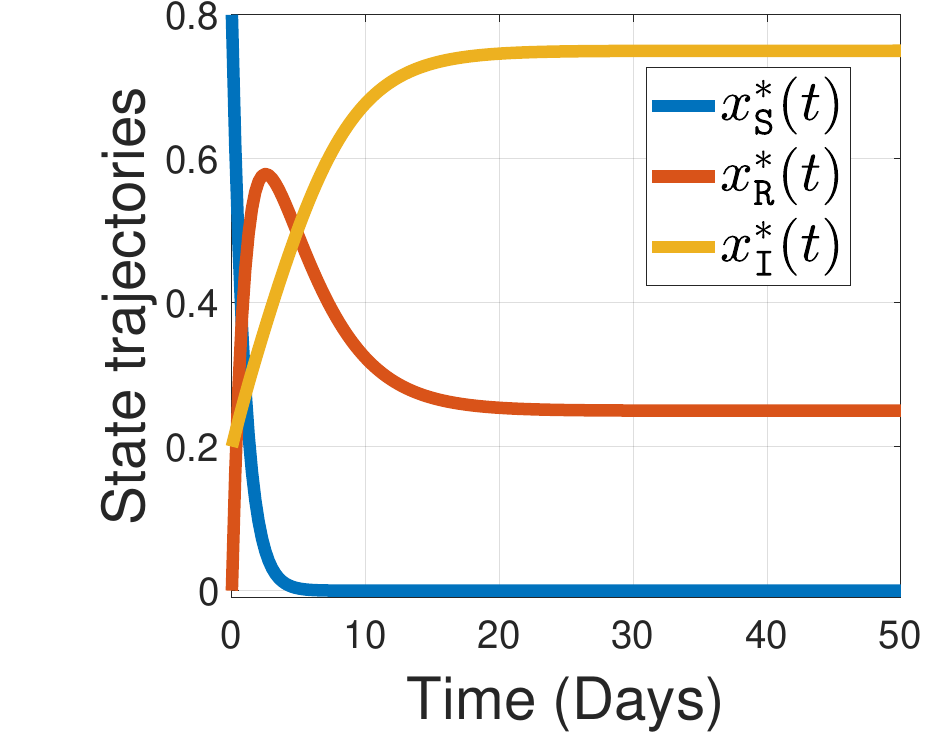}
         \caption{Singular $\uv^*$}
         \label{fig:c}
     \end{subfigure}
     \hfill
        \caption{Control inputs and state trajectories under different parameters.}
  \label{fig:only_case}
\end{figure}

 Now we focus on the simulations obtained under the third set of parameters. We violate Assumptions \ref{assump:comp_imm} and \ref{assump:beta_c_i} by selecting $\cV = 3$, $\beta = 3$, and $\beta > \hat{\beta}$, which implies partial immunity. Figure~\ref{fig:c} illustrates the smooth behavior of the optimal control, which implies a singular vaccination input $\uv^*$, whereas the input $\up^*$ remains constant at its lower limit. Additionally, this implies that Assumptions \ref{assump:comp_imm} and \ref{assump:beta_c_i} may be close to necessary for the existence of non-singular optimal control laws. A full analysis of the case in which $\beta > \hat{\beta}$ is an interesting area of research that remains to be explored in future work. The possible singularities in $\uv^*$ motivate us to investigate the behavior of diseases with a partial immunity.

Before concluding this section, we summarize the main advantages of our analysis and techniques in a broader context. We have demonstrated the non-existence of singular control inputs by analyzing the values of the time-varying switching functions $\phi_{\mathtt{P}}(t)$ and $\phi_{\mathtt{V}}(t)$. Necessary conditions for a singularity require these switching functions, along with their higher derivatives, to vanish identically. Our methodology is robust and can be applied to any compartmental model where the dynamics include control inputs and the running cost is linear in these inputs. Although our results focus on the relatively less-explored SIRI reinfection model, the technique for determining whether the control inputs are singular or non-singular could be useful for other epidemic models with different forms of control inputs. Thus, our technique is not model-sensitive and remains effective across various systems. This wide applicability of our analysis is due to the fundamental concepts of vanishing switching functions and their higher derivatives, which do not depend on the specific details of the underlying model.

\section{Conclusions and future work}\label{sec6}
In this paper, we considered the problem of optimal vaccination and protection for the class of SIRI epidemiological models. The biological significance of our results lies in the establishment of sufficient conditions on the susceptibility to infection and reinfection, and the cost of prevention and vaccination. Specifically, the proposed SIRI model takes reinfection into account, which is a key characteristic of diseases that result in short-term immunity. During the COVID-19 pandemic, we observed that the reinfection rates, particularly due to variants such as Delta and Omicron, were higher than the initial infection rates. Similarly, other diseases also exist which impart a compromised immunity. The existence of such real-world infectious diseases justifies the focus of this work on a compromised immunity. We proved that it is impossible for both the \um{optimal} control inputs to be simultaneously singular, when the immunity is compromised. Then, we performed a detailed analysis on the existence of a singularity of the optimal vaccination control input, and obtained sufficient conditions under which singular arcs (for optimal vaccination control input) are suboptimal and a non-singular vaccination control is optimal. Additionally, it is important to note that bang-bang control is often considered a more appropriate intervention in practical epidemiological settings. The numerical results provided valuable insights into the optimal control structure and evolution of the epidemic under such control inputs. Additionally, we illustrated that higher reinfection rates render vaccinations ineffective as the control input. 

Since our bang-bang control optimality results are only guaranteed in the regime of compromised immunity, an extension of the analysis for which the infection rate is higher than the reinfection rate (as also seen in Figure \ref{fig:c}) would be worthwhile. It will be worthwhile to extend our analysis to include diseases that impart a partial immunity to individuals. Establishing the existence of singularities, and deriving expressions of the singular controls remains as a future work. Furthermore, an empirical validation using real data (e.g., involving a controlled interventional challenge study) would be valuable. Another worthwhile extension of our work would be to include infection testing and contact tracing as additional control inputs when all states of the compartmental epidemiological model (e.g., the SAIRU model in \cite{authour40}) are not directly observable.

\section*{Author contributions}
Urmee Maitra: Conceptualization, Formal Analysis, Investigation, Methodology, Validation, Writing - original draft, review \& editing; Ashish R. Hota: Conceptualization, Formal Analysis, Writing - original draft, review \& editing; Rohit Gupta: Conceptualization, Formal Analysis, Writing - original draft, review \& editing; Alfred O. Hero: Conceptualization, Formal Analysis, Writing - original draft, review \& editing. 

\section*{Use of AI tools declaration}
The authors declare they have not used Artificial Intelligence (AI) tools in the creation of this article.

\section*{Acknowledgments (All sources of funding of the study must be disclosed)}
This research was partially supported by grant CCF-2246213 from the National Science Foundation. The authors thank anonymous reviewers for their helpful suggestions.

\section*{Conflict of interest}
The authors declare no conflict of interest in this paper.

\end{document}